\documentclass{article}

\usepackage{amsmath, amssymb, enumerate, amsthm, setspace}
\usepackage[all]{xy}

\newcommand{\Top}{\mathbf{Top}}

\newcommand{\Set}{\mathbf{Set}}

\newcommand{\Ord}{\mathbf{Ord}}

\newcommand{\Rel}{\mathbf{Rel}}

\newcommand{\Grph}{\mathbf{Grph}}
\newcommand{\Grp}{\mathbf{Grp}}

\newcommand{\C}{\ensuremath{\mathbb{C}}}
\newcommand{\D}{\ensuremath{\mathbb{D}}}

\newcommand{\M}{\ensuremath{\mathcal{M}}}

\newcommand{\op}{\mathrm{op}}
\newcommand{\Sub}{\mathrm{Sub}}

\newcommand{\Eq}[1]{\ensuremath{\mathrm{Eq}(#1)}}
\newcommand{\Ef}[1]{\ensuremath{\mathrm{Ef}(#1)}}
\newcommand{\Proj}[1]{\ensuremath{\mathrm{Proj}(#1)}}

\title{$\mathcal{M}$-coextensive objects and the strict refinement property}

\onehalfspacing
\author{Michael Hoefnagel}

\newtheorem{theorem}{Theorem}
\numberwithin{theorem}{section}

\newtheorem*{theorem*}{Theorem}
\newtheorem*{proposition*}{Proposition}
\newtheorem{definition}{Definition}
\numberwithin{definition}{section}
\newtheorem{proposition}{Proposition}
\numberwithin{proposition}{section}

\numberwithin{lemma}{section}
\newtheorem{corollary}{Corollary}
\numberwithin{corollary}{section}
\newtheorem{example}{Example}
\numberwithin{example}{section}
\newtheorem{remark}{Remark}
\numberwithin{remark}{section}
\newtheorem{convention}{Convention}

\date{}
\begin{document}
\maketitle
\begin{abstract}
The notion of an $\M$-coextensive object is introduced in an arbitrary category $\C$, where $\M$ is a distinguished class of morphisms from $\C$. This notion allows for a categorical treatment of the strict refinement property in universal algebra, and highlights its connection with extensivity in the sense of Carboni, Lack and Walters. If $\M$ is the class of product projections in a category $\C$ with finite products, then $\M$-coextensivity is closely related to the notion of Boolean category in the sense of E. Manes: $\C$ is co-Boolean if and only if its product projections are pushout stable, and every object is $\M$-coextensive. We show that if $\C$ is a variety of algebras then the $\M$-coextensive objects are precisely those algebras which have the strict refinement property, when $\M$ is the class of product projections. If $\M$ is the class of surjective homomorphisms in the variety, then the $\M$-coextensive objects are those algebras which have directly-decomposable (or factorable) congruences. Moreover, these results are proved for any object with global support in a regular category. We also show that in exact Mal'tsev categories, every centerless object with global support is projection-coextensive, i.e., has the strict refinement property. We will also show that in every exact majority category, every object with global support has the strict refinement property. 
\end{abstract}
The published version of this article can be found at \cite{Hoefnagel2020b}. 
\section{Introduction} \label{sec-introduction}

In universal algebra, various refinement properties have been defined for direct-product decompositions of structures, all of which give information about the uniqueness of such decompositions. The so-called \emph{strict refinement property} was first defined in \cite{CJT64}, and implies that any isomorphism between a product of irreducible structures is uniquely determined by a family of isomorphisms between each factor. If $A$ is a universal algebra which has the strict refinement property, then it was proved in \cite{CJT64} that the \emph{factor-congruences} of $A$ (see Definition~\ref{def-factor-relation}) form a Boolean sublattice of $\Eq{A}$ - the lattice of congruences on $A$. Moreover, this is a characteristic property of algebras which have the strict refinement property. Examples of structures which possess the strict refinement property include any unitary ring, any centerless or perfect group, any connected poset or digraph \cite{Hashimoto1951, Walker1987}, any lattice, or more generally, any congruence distributive algebra. Many geometric structures possess the dual property, which is to say that they have the co-strict refinement property. As we will show in this paper, this is mainly due to the fact that categories of geometric structures tend to be \emph{extensive} in the sense of \cite{CLW93}. The main aim of this paper is to investigate the relationship between the strict refinement property and coextensivity. In particular, we introduce the notion of an $\M$-coextensive object in an arbitrary category $\C$, where $\M$ is a distinguished class of morphisms from $\C$. When $\M$ is the class of product-projections in a category with finite products, and in this case the $\M$-coextensive objects are called \emph{projection-coextensive}, then projection-coextensivity is equivalent to the strict refinement property for algebras in a variety. Projection-coextensivity is also closely related to the notion of a \emph{Boolean category} in the sense of E. Manes \cite{Manes1992}. This relationship is captured by Corollary~\ref{cor:Boolean-category} below: every object in a co-Boolean category is projection-coextensive. But there are categories with finite products in which every object is projection-coextensive which are not co-Boolean. However, if $\C$ is a category with finite products and pushout stable product projections, then $\C$ is co-Boolean if and only if every object in $\C$ is projection-coextensive. 

\subsection{The notion of $\M$-coextensivity}
\noindent
Recall that a category $\C$ with finite products is \emph{coextensive} if the canonical functor 
\[
(A\downarrow \C) \times (B\downarrow \C) \xrightarrow{\times} (A \times B \downarrow \C),
\]
 is an equivalence of categories. The following proposition provides an equivalent definition of coextensivity, which is what the notion of $\M$-coextensivity is based on.
 \begin{proposition*}[Dual of Proposition 2.2 in \cite{CLW93}]
  A category $\C$ with finite products is coextensive if and only if it admits pushouts of arbitrary morphisms along product projections, and in every commutative diagram
\[
\xymatrix{
	A_1 \ar[d] &  X\ar[r] \ar[d] \ar[l] & A_2  \ar[d] \\
	B_1  & \ar[r] Y  \ar[l] & B_2
}
\]
where the top row is a product diagram, the bottom row is a product diagram if and only if both squares are pushouts.
 \end{proposition*}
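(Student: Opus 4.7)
My approach is to unpack the definition of coextensivity as the slice-category equivalence
\[
\times: (A_1 \downarrow \C) \times (A_2 \downarrow \C) \longrightarrow (A_1 \times A_2 \downarrow \C),
\]
and to translate the two stated conditions back and forth through it. Formally the statement is the dual of Carboni--Lack--Walters Proposition~2.2 under $\C \mapsto \C^{\op}$ (coproducts becoming products, injections becoming projections, pullbacks becoming pushouts), so in principle one could simply invoke duality; nonetheless I outline the argument directly.

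For the direction $(\Rightarrow)$, assume $\times$ is an equivalence. Given an arbitrary $f: A_1 \times A_2 \to Y$, essential surjectivity produces an isomorphism in the slice identifying $f$ with some $f_1 \times f_2: A_1 \times A_2 \to Y_1 \times Y_2$. To establish existence of the pushout of $\pi_1$ along $f$ I would verify directly that the square with right leg $f_1: A_1 \to Y_1$ and bottom leg $\pi_1^Y: Y_1 \times Y_2 \to Y_1$ is a pushout. The key trick is that morphisms $h: Y_1 \times Y_2 \to W$ satisfying $h(f_1 \times f_2) = g\pi_1$ are exactly morphisms $(f_1 \times f_2) \to (g \pi_1)$ in the slice, and since $g\pi_1$ is identified under the equivalence with the pair $(g: A_1 \to W,\, !_{A_2}: A_2 \to 1)$, full faithfulness gives a bijection between such $h$ and pairs $(u: Y_1 \to W,\, !_{Y_2})$ with $u f_1 = g$; this yields both existence and uniqueness in the pushout property. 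Applied to the diagram in the statement: once the top row is identified as $\pi_1,\pi_2$ on $A_1 \times A_2$ and $X \to Y$ is rewritten as $f_1 \times f_2$, both squares being pushouts forces $B_i \cong Y_i$ with the pushout maps being $f_i$ and $\pi_i^Y$, so the bottom row becomes the product $Y_1 \leftarrow Y_1 \times Y_2 \to Y_2$. The reverse implication runs the same identification backwards.

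For the direction $(\Leftarrow)$, assume the two stated conditions. Define a candidate inverse $F: (A_1 \times A_2 \downarrow \C) \to (A_1 \downarrow \C) \times (A_2 \downarrow \C)$ sending $f: A_1 \times A_2 \to Y$ to the pair $(A_1 \to B_1,\, A_2 \to B_2)$ whose components are the two pushout legs of $\pi_1, \pi_2$ along $f$, guaranteed to exist by the first hypothesis. The second hypothesis, applied to the diagram assembled from these two pushouts, identifies $B_1 \leftarrow Y \to B_2$ as a product, giving $\times \circ F \cong \mathrm{id}$. For $F \circ \times \cong \mathrm{id}$, apply the second hypothesis to the diagram with top row $A_1 \leftarrow A_1 \times A_2 \to A_2$, vertical legs $(f_1, f_1 \times f_2, f_2)$, and bottom row $Y_1 \leftarrow Y_1 \times Y_2 \to Y_2$, both already products; this forces the outer squares to be pushouts, so $F(f_1 \times f_2) \cong (f_1, f_2)$. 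The main obstacle I anticipate is the bookkeeping of naturality for these isomorphisms and the verification that $F$ is well-defined on morphisms---a somewhat fiddly but routine chase combining the universal properties of pushouts with those of products---rather than any conceptual difficulty.
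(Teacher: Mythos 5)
Your proposal is correct in outline, but you should know that the paper does not actually prove this proposition: it is stated as the dual of Proposition~2.2 of \cite{CLW93} and taken as given, so the paper's route is pure duality plus citation. Your direct argument is therefore a genuinely different presentation, though in substance it is the standard proof of the cited result transported across $\C \mapsto \C^{\op}$. The key step in your forward direction is sound: identifying morphisms $h\colon Y_1\times Y_2\to W$ with $h(f_1\times f_2)=g\pi_1$ with morphisms $u\colon Y_1\to W$ satisfying $uf_1=g$ (via $h=u\pi_1^{Y}$, using $g\pi_1\cong g\times {!}_{A_2}$ under $W\cong W\times 1$ and full faithfulness) is precisely the universal property exhibiting $Y_1$, with legs $f_1$ and $\pi_1^{Y}$, as the pushout, and the remaining claims follow by uniqueness of pushouts up to isomorphism. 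In the converse direction your pseudo-inverse $F$ is the right construction, and the functoriality and naturality you defer genuinely are routine consequences of the uniqueness clauses of the pushout and product universal properties, so nothing conceptual is missing. What the paper's approach buys is brevity; what yours buys is self-containedness, and it also isolates computations that the paper later reuses at the level of single objects (for instance, the degenerate diagram with bottom row $1\leftarrow B\xrightarrow{1_B} B$ that yields epimorphy of projections in Proposition~\ref{prop-product-projections-epimorphisms}). If you write it up in full, the one point worth making explicit is that in the converse direction faithfulness of $\times$ (equivalently, the uniqueness half of your bijections) ultimately rests on product projections being epimorphisms, which itself is extracted from condition (b) applied to that degenerate diagram.
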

\noindent
\begin{definition} \label{def-M-pushout}
Let $\C$ be a category and $\M$ a class of morphisms from $\C$. A commutative square
\[
\xymatrix{
X \ar[r] \ar[d] & A \ar[d]^{a} \\
B \ar[r]_{b} & P
}
\] 
in $\C$ is called an \emph{$\M$-pushout} if it is a pushout in $\C$, and $a,b$ are morphisms in $\M$.
\end{definition}
 \begin{definition} \label{def-M-coextensive}
 	Let $\C$ be a category and $\M$ a class of morphisms in $\C$. An object $X$ is said to be $\M$-\emph{coextensive} if every morphism in $\M$ with domain $X$ admits an $\M$-pushout along every product projection of $X$, and in each commutative diagram 
 	\[
 	\xymatrix{
 		A_1 \ar[d] &  X\ar[r] \ar[d] \ar[l] & A_2  \ar[d] \\
 		B_1  & \ar[r] Y  \ar[l] & B_2
 	}
 	\]
  where the top row is a product diagram and the vertical morphisms belong to $\M$, the bottom row is a product diagram if and only if both squares are $\M$-pushouts. 
 \end{definition}
 Of particular importance to the current paper is when $\M$ is the class of all product projections in $\C$, and in this case we call an object \emph{projection-coextensive} if it is $\M$-coextensive. We show that every projection-coextensive object in a category with (finite) products has the (finite) strict refinement property, and when the base category is \emph{regular} \cite{BGO71}, the converse also holds. Every projection-coextensive object has epimorphic product projections, so that the full subcategory of $(X\downarrow \C)$ consisting of product projections is a preorder. In a coextensive category, the posetal-reflection of this preorder $\Proj{X}$ is a Boolean-lattice (since every coextensive category is co-Boolean in the sense of \cite{Manes1992}). We show that this result still holds for projection-coextensive objects, and that it is a characteristic property of projection-coextensivity (Theorem~\ref{thm-projection-coextensive-characterization}). This result can be seen as an analogue of Theorem~5.11 in \cite{Manes1992} and Proposition~1.3.3 in \cite{Diers1986} for objects in a category with finite products. It also captures the classical characterization of the strict refinement property for algebras mentioned earlier. \emph{Pre-exact} categories are defined in Definition~\ref{def-pre-exact} as an intermediate between regular and exact categories. In the pre-exact context we establish a characterization of the strict refinement property (Theorem~\ref{thm-characterization-two-strict-refinement}) which allows us to show that \emph{centerless objects} in a \emph{Mal'tsev category} \cite{CPP91, CLP91} have the strict refinement property, as well as that every object (with global support) in a pre-exact \emph{majority category} \cite{Hoe18a} has the strict refinement property. In particular, this shows that any object with global support in an \emph{arithmetical category} in the sense of \cite{Ped96} has the strict refinement property. 

When $\M$ is chosen to be the class of regular epimorphisms in $\C$, then we say that $X$ is \emph{regularly-coextensive}{} if it is $\M$-coextensive. We will show that if $\C$ is then a variety of universal algebras, then any non-empty algebra $X$ is regularly-coextensive if and only if it has \emph{factorable congruences} in the sense of \cite{I2000}. Moreover, this result extends to regular categories. It is then immediate that any object $X$ in a regular category with global support which has factorable congruences necessarily has the strict refinement property. This generalizes a result of \cite{I2000}. 

\begin{convention}
Throughout this paper, we will assume that categories have finite products, so that by ``a category $\C$'', we mean ``a category $\C$ with finite products''. If $f: X \rightarrow Y$ and $g:Y \rightarrow Z$ are morphisms in a category $\C$, then we will always denote their composite by $gf$.
\end{convention}
\section{$\M$-coextensive objects have strict refinements}
Throughout this section, suppose that $\C$ is a category and that $\M$ is a class of morphisms in $\C$. Unless stated otherwise, we will assume that $\M$ contains all isomorphisms in $\C$, and is closed under composition and products in $\C$. 
\begin{remark} \label{rem-prod-projection-in-M}
Under these assumptions on $\M$, any product projection of an $\M$-coextensive object is contained in $\M$. This is because we may take the vertical morphisms in the diagram of Definition~\ref{def-M-coextensive} to be the identity morphisms. Note also that if $X$ is $\M$-coextensive, then for any product projection $X \xrightarrow{p} A$ we have that any terminal morphism $A \rightarrow 1$ is in $\M$. This is because in the diagram:
\[
\xymatrix{
	X \ar[d]_{p} &  X\ar[r] \ar[d]_p \ar[l]_{1_X} & 1  \ar[d] \\
	A  & \ar[r] A  \ar[l]^{1_A} & 1
}
\]
the bottom row is a product diagram, all the vertical morphisms are in $\M$, and hence the right-hand square is an $\M$-pushout by $\M$-coextensivity of $X$.
\end{remark}
\noindent
If $X \xrightarrow{\pi_1} X_1$ is a product projection, then a morphism $X \xrightarrow{\pi_2} X_2$ is called a \emph{complement} of $\pi_1$ if the diagram
\[
X_1 \xleftarrow{\pi_1} X \xrightarrow{\pi_2} X_2,
\]
is a product diagram in $\C$. Recall that $X$ is said to have \emph{co-disjoint} products if any product projection of $X$ is an epimorphism and the pushout of any complementary product projections of $X$ is  terminal. The following proposition is essentially just the dual of Proposition~2.6 in \cite{CLW93}, when coextensivity is restricted to single objects. 
\begin{proposition} \label{prop-product-projections-epimorphisms} Let $X$ be an $\M$-coextensive object in a category $\C$, then $X$ has co-disjoint products. 
\end{proposition}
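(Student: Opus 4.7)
The plan is to adapt the dual of the argument for Proposition~2.6 of \cite{CLW93} and to extract both co-disjointness conditions from a single application of the $\M$-coextensivity axiom. Given a product diagram $X_1 \xleftarrow{\pi_1} X \xrightarrow{\pi_2} X_2$, I would consider the commutative diagram
\[
\xymatrix{
X_1 \ar[d]_{1} & X \ar[l]_{\pi_1} \ar[r]^{\pi_2} \ar[d]_{\pi_1} & X_2 \ar[d]^{!} \\
X_1 & X_1 \ar[l]^{1} \ar[r]_{!} & 1
}
\]
whose top row is the given product and whose bottom row is the canonical product decomposition $X_1 \cong X_1 \times 1$.

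The first step is to verify that each vertical morphism lies in $\M$. The leftmost arrow is the identity, which lies in $\M$ by our standing hypothesis; the middle arrow $\pi_1$ lies in $\M$ by Remark~\ref{rem-prod-projection-in-M}, and the terminal morphism $X_2 \to 1$ on the right lies in $\M$ by the same remark. With both rows product diagrams and all vertical arrows in $\M$, the $\M$-coextensivity of $X$ forces each of the two squares to be an $\M$-pushout.

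The left $\M$-pushout has both horizontal legs of the source span equal to $\pi_1$ and both cocone legs equal to $1_{X_1}$; a direct check of the universal property shows this is equivalent to $\pi_1$ being an epimorphism, giving the first half of co-disjointness. The right $\M$-pushout states that the pushout of $X_1 \xleftarrow{\pi_1} X \xrightarrow{\pi_2} X_2$ is $1$, with cocone legs the terminal morphisms $X_1\to 1$ and $X_2\to 1$, which is precisely the second condition.

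I do not expect a serious obstacle here: the entire proposition falls out of a single well-chosen instance of the $\M$-coextensivity axiom, the only bookkeeping being the verification, via Remark~\ref{rem-prod-projection-in-M}, that the middle and right vertical morphisms belong to $\M$.
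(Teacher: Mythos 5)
Your proof is correct and is essentially the paper's own argument: the paper applies the coextensivity axiom to the mirror-image diagram with middle vertical $\pi_2$ and bottom row $1 \leftarrow B \xrightarrow{1_B} B$, reading off epimorphy of $\pi_2$ from the right square and co-disjointness from the left, but this is the same single-instance argument as yours up to left-right symmetry. The membership checks via Remark~\ref{rem-prod-projection-in-M} are exactly as in the paper.
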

\begin{proof}
	Consider the diagram below, where the top row is a product diagram.
	\[
	\xymatrix{
		A \ar[d] & \ar[l]_{\pi_1} X \ar[d]^{\pi_2} \ar[r]^{\pi_2} & B \ar[d]^{1_B} \\
		1 & \ar[l] B \ar[r]_{1_B} & B
	}
	\]
	By Remark~\ref{rem-prod-projection-in-M}, all the vertical morphisms are in $\M$. Since the bottom row is a product diagram, it follows that both squares are pushouts. This implies that $\pi_2$ is an epimorphism, and that the pushout of $\pi_1$ along $\pi_2$ is a terminal object.
\end{proof}
\begin{proposition} \label{prop-composite-proposition}
Let $X$ be an $\M$-coextensive object in $\C$, and suppose that $X\xrightarrow{p} A$ is any product projection. Then for any morphism $f:A \rightarrow B$ in $\C$, if $f   p \in \M$ then $f \in \M$. 
\end{proposition}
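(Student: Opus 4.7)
The plan is to produce an $\M$-pushout of $fp$ along $p$, compare it with a second (easier) pushout of the same span, and conclude that $f$ equals an $\M$-morphism composed with an isomorphism.

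First, by Remark~\ref{rem-prod-projection-in-M}, the product projection $p$ is in $\M$, and by hypothesis $fp \in \M$. Since $X$ is $\M$-coextensive, we obtain an $\M$-pushout
\[
\xymatrix{
X \ar[r]^p \ar[d]_{fp} & A \ar[d]^{f'} \\
B \ar[r]_{g} & P
}
\]
with both $f', g \in \M$. This is the primary use of $\M$-coextensivity.

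Next, I would consider the square with the same span $A \xleftarrow{p} X \xrightarrow{fp} B$ but with right-hand leg $f: A \to B$ and bottom leg $1_B: B \to B$. It commutes trivially. The key claim is that it is also a pushout. To verify this, given any cocone $(u: A \to Q, v: B \to Q)$ with $up = v(fp)$, set $w := v$; the condition $wf = u$ reduces to $u = vf$, which follows from $up = (vf)p$ by cancelling $p$. This cancellation is legitimate because $p$ is an epimorphism by Proposition~\ref{prop-product-projections-epimorphisms}. This is the only subtle point: the ``identity square'' functions as a pushout precisely because co-disjoint products force $p$ to be epi. Uniqueness of $w$ is forced by the condition $w \cdot 1_B = v$.

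Finally, uniqueness of pushouts produces an isomorphism $\phi: P \to B$ with $\phi f' = f$ (and $\phi g = 1_B$). Since $\M$ contains all isomorphisms and is closed under composition, $f = \phi f' \in \M$, which completes the proof.
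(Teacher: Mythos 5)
Your proposal is correct and follows essentially the same route as the paper: the paper also obtains the $\M$-pushout of $fp$ along $p$ guaranteed by $\M$-coextensivity, observes that the square with legs $f$ and $1_B$ is a pushout of the same span, and concludes $f\in\M$ from closure of $\M$ under isomorphisms and composition. You have merely made explicit the two points the paper leaves implicit, namely that the identity square is a pushout because $p$ is an epimorphism (Proposition~\ref{prop-product-projections-epimorphisms}) and that the comparison isomorphism transports membership in $\M$.
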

\begin{proof}
There exists an $\M$-pushout diagram of $p$ along $f   p$ isomorphic to the pushout diagram:
\[
\xymatrix{
	A \ar[d]_{f} & X \ar[l]_{p} \ar[d]^{f  p} \\
	B & B \ar[l]^{1_B}
}
\]
Since $\M$ contains all isomorphisms and is closed under composition, it follows that $f$ is contained in $\M$. 
\end{proof}

\begin{definition} \label{def-projection-coextensive}
	An object $X$ in a category $\C$ is called \emph{projection-coextensive} if it is $\M$-coextensive with $\M$ the class of all product projections in $\C$.
\end{definition}
\begin{remark}
Note that Remark~\ref{rem-prod-projection-in-M} implies that every object in $\C$ that is $\M$-coextensive, is necessarily projection-coextensive. 
\end{remark}
\noindent
The strict refinement property mentioned in the introduction may be formulated as follows: 
\begin{definition} \label{def-strict-refinement}
	An object $X$ in a category $\C$ is said to have the (finite) \emph{strict refinement property} if for any two (finite) product diagrams $(X \xrightarrow{f_i} A_i)_{i \in I}$ and $(X \xrightarrow{g_j} B_j)_{j \in J}$, there exist families of morphisms $(A_i \xrightarrow{\alpha_{i,j}} C_{i,j})_{i\in I,j\in J}$ and \\$(B_j \xrightarrow{\beta_{i,j}} C_{i,j})_{i\in I, j\in J}$ such that $\alpha_{i,j}f_i =\beta_{i,j}g_j$ and the diagrams $(A_i \xrightarrow{\alpha_{i,s}} C_{i,s})_{s\in J}$ and $(B_j \xrightarrow{\beta_{t,j}} C_{t,j})_{t \in I}$ are product diagrams for any $i\in I$ and $j\in J$. 
\end{definition} 
\begin{theorem} \label{thm-projection-coextensive-implies-strict-refinement}
If $X$ is a projection-coextensive object in a category $\C$ which admits (finite) products, then $X$ has the (finite) strict refinement property.
\end{theorem}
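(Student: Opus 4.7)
The plan is to build each $C_{i,j}$ as the pushout of $f_i$ along $g_j$, where we take $\M$ to be the class of all product projections in $\C$. Each such pushout exists because projection-coextensivity guarantees that every product projection out of $X$ (such as $f_i$) admits an $\M$-pushout along every product projection of $X$ (such as $g_j$); and because the result is an $\M$-pushout, its two legs $\alpha_{i,j}\colon A_i \to C_{i,j}$ and $\beta_{i,j}\colon B_j \to C_{i,j}$ are themselves product projections. The relation $\alpha_{i,j} f_i = \beta_{i,j} g_j$ is then immediate from the commutativity of the pushout square.

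It remains to show that for each fixed $j \in J$ the family $(B_j \xrightarrow{\beta_{i,j}} C_{i,j})_{i \in I}$ is a product diagram, and symmetrically for each fixed $i \in I$. In the binary-binary case ($|I|=|J|=2$) this is a single invocation of projection-coextensivity: for $j=1$ consider the diagram
\[
\xymatrix{
A_1 \ar[d]_{\alpha_{1,1}} & X \ar[l]_{f_1} \ar[r]^{f_2} \ar[d]_{g_1} & A_2 \ar[d]^{\alpha_{2,1}} \\
C_{1,1} & B_1 \ar[l]^{\beta_{1,1}} \ar[r]_{\beta_{2,1}} & C_{2,1}
}
\]
The top row is a product diagram by hypothesis; the middle vertical $g_1$ is in $\M$ as a product projection; the outer vertical morphisms $\alpha_{i,1}$ lie in $\M$ by the preceding paragraph; and both squares are $\M$-pushouts by construction. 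By the ``if'' direction of Definition~\ref{def-M-coextensive}, the bottom row is a product diagram, giving $B_1 \cong C_{1,1}\times C_{2,1}$. Swapping the roles of the two decompositions handles the other fixings and delivers the binary-binary case in full.

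For the general finite case one proceeds by induction on $|I|+|J|$. Given sizes $|I|=n\geq 3$ with $|J|=2$, one picks $i_0 \in I$ and regroups $(f_i)_{i\in I}$ as the binary product $(f_{i_0},\ f_{I\setminus\{i_0\}})$, where $f_{I\setminus\{i_0\}}\colon X \to \prod_{i\neq i_0} A_i$ is the canonical map. The binary-binary case produces a binary product $B_1 \cong C_{i_0,1}\times D$ with $D$ the pushout of $f_{I\setminus\{i_0\}}$ along $g_1$. The map $X\to D$ is a composite of two product projections, hence is itself a product projection of $X$; applying the inductive hypothesis to the decomposition $(f_i)_{i\in I\setminus\{i_0\}}$ together with the binary decomposition $X\to D$ and its complement, and identifying the resulting pushouts with the $C_{i,1}$ via the pushout pasting lemma, one obtains $D\cong \prod_{i\neq i_0} C_{i,1}$ and hence $B_1\cong \prod_{i\in I} C_{i,1}$. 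A similar induction handles larger $|J|$.

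The main obstacle is precisely this inductive step: Definition~\ref{def-M-coextensive} is stated only for binary product diagrams, so extending the refinement to arbitrary finite products requires a careful application of the pushout pasting lemma to identify each pushout of $f_i$ along $g_j$ with its analogue taken along the composite $X\to D\to \cdots$ obtained after peeling off factors, and requires tracking which intermediate morphisms belong to $\M$. The latter is ensured by our standing assumption that $\M$ contains the isomorphisms and is closed under composition and products.
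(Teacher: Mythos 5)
Your construction of the $C_{i,j}$ as pushouts of $f_i$ along $g_j$, and your binary--binary case, coincide exactly with the first half of the paper's proof: the paper likewise forms, for each $n$ and $m$, the $\M$-pushouts of $f_n$ and of $\overline{f_n}\colon X\to\prod_{i\neq n}A_i$ along $g_m$ and reads off from Definition~\ref{def-M-coextensive} that $B_m\cong C_{n,m}\times\overline{C_{n,m}}$. The problems are in your passage from this binary information to the full product diagrams. The first is that an induction on $|I|+|J|$ can only ever reach finite index sets, whereas the theorem (in its unparenthesized reading) asserts the strict refinement property for \emph{arbitrary} product diagrams when $\C$ has all small products; that case is used later in the paper and is simply not touched by your argument. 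The second is that your inductive step does not typecheck as written: you ``apply the inductive hypothesis to the decomposition $(f_i)_{i\in I\setminus\{i_0\}}$ together with the binary decomposition $X\to D$ and its complement,'' but $(f_i)_{i\neq i_0}$ is not a product diagram for $X$, so the hypothesis does not apply to that pair. The induction must be rerouted through the factor $\overline{A_{i_0}}=\prod_{i\neq i_0}A_i$: one needs Proposition~\ref{prop-projection-coextensive-closed-under-product-projections} to know $\overline{A_{i_0}}$ is again projection-coextensive, then applies the inductive hypothesis to the $(n-1)$-ary decomposition $(\pi_i)_{i\neq i_0}$ of $\overline{A_{i_0}}$ and the binary decomposition of $\overline{A_{i_0}}$ given by the pushout leg $\overline{A_{i_0}}\to D$ and its complement, and finally identifies the resulting pushouts with the $C_{i,m}$ by pasting. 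This is repairable, but you never invoke the closure of projection-coextensivity under taking factors, which the repair requires.

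The paper avoids both issues with a single non-inductive step that you should compare with your plan. For fixed $n$, it takes the product over all $j\in J$ of the binary pushout rows, producing a diagram with top row $A_n\leftarrow X\rightarrow\overline{A_n}$, middle vertical $(g_j)_{j\in J}\colon X\to\prod_j B_j$, and bottom row $\prod_j C_{n,j}\leftarrow\prod_j B_j\rightarrow\prod_j\overline{C_{n,j}}$, which is a product diagram because each row being multiplied is one. The outer verticals are product projections by Proposition~\ref{prop-composite-proposition} (their composites with $f_n$, resp.\ $\overline{f_n}$, are product projections of $X$), so projection-coextensivity forces both squares to be pushouts; since the middle vertical $(g_j)_{j\in J}$ is an isomorphism, so is $(\alpha_{n,j})_{j\in J}\colon A_n\to\prod_j C_{n,j}$. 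This works uniformly for arbitrary $J$, needs no pasting lemma and no induction. Either adopt that argument, or restrict your claim to the finite strict refinement property and fix the inductive step as indicated above.
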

\begin{proof}
Suppose that  $(X \xrightarrow{f_i} A_i)_{i \in I}$ and $(X \xrightarrow{g_j} B_j)_{j \in J}$ are any two product diagrams for $X$. Let $\overline{A_n}$ be the product of the $A_i$'s where $n \neq i$ and let $\overline{f_n}:X \rightarrow \overline{A_n}$ be the induced morphism $(f_i)_{i\neq n}$, and similarly let $\overline{B_m}$ be the product of the $B_j$'s where $j \neq m$. Since $X$ is projection-coextensive, for each $n \in I$ and $m \in J$ there is a diagram
\[
\xymatrix{
	A_n \ar[d]_-{\alpha_{n,m}} &  X\ar[r]^{\overline{f_n}} \ar[d]^{g_m} \ar[l]_{f_n} & \overline{A_n}  \ar[d]^-{\overline{\alpha_{n,m}}}\\
	C_{n,m}  & \ar[r]_-{\beta'_{n,m}} B_m \ar[l]^-{\beta_{n,m}} & \overline{C_{n,m}}
}
\]
where each square is a pushout, and the bottom row is a product diagram. We show that the diagrams $(A_n \xrightarrow{\alpha_{n,j}} C_{n, j})_{j \in J}$ and $(B_m \xrightarrow{\beta_{i,m}} C_{i, m})_{i \in I}$ are product diagrams for any $n \in I$ and $m \in J$.  In the diagram 
\[
\xymatrix{
	A_n \ar[d]_-{(\alpha_{n,j})_{j \in J}} & &  X\ar[rr]^{\overline{f_n}} \ar[d]^{(g_j)_{j \in J}} \ar[ll]_{f_n} & & \overline{A_n}  \ar[d]^-{(\overline{\alpha_{n,j}})_{j \in J}} \\
	\prod \limits_{j \in J} C_{n,j}  & & \ar[rr]_-{\prod \limits_{j \in J}\beta'_{n,j}} \prod \limits_{j \in J}B_j \ar[ll]^-{\prod \limits_{j \in J}\beta_{n,j}} & & \prod\limits_{j\in J} \overline{C_{n,j}}
}
\]
the bottom row is a product diagram. Moreover, the outer vertical morphisms are product projections by Proposition~\ref{prop-composite-proposition}, and therefore by projection-coextensivity of $X$ the two squares above are pushouts. Since the central vertical morphism in the diagram is an isomorphism, it follows that the morphism $(\alpha_{n,j})_{j\in J}$ is an isomorphism, and we can similarly obtain $(\beta_{i, m})_{i \in I}$ as an isomorphism.
\end{proof}
\begin{remark}
An object $X$ is called \emph{directly-irreducible} if it is not a terminal object, and for any product diagram of $X$, one of the product projections in it is an isomorphism. The uniqueness of coproduct decompositions in an extensive category is well known (see Corollary 5.4 in \cite{Cockett1993}). Theorem~\ref{thm-projection-coextensive-implies-strict-refinement} together with Proposition~\ref{prop-projection-coextensive-closed-under-product-projections} allows us to deduce the same fact for product decompositions of projection-coextensive objects in a category with (finite) products: if $X$ is any projection-coextensive object in a category $\C$, and we are given two product diagrams $(a_i:X \rightarrow A_i)_{i\in I}$ and $(b_j:X \rightarrow B_j)_{j \in J}$ for $X$, where each $A_i$ and $B_j$ are directly-irreducible, then there exists a bijection $\sigma:I \rightarrow J$ and isomorphisms $f_i:A_i \rightarrow B_{\sigma(i)}$ such that $f_i a_i = b_{\sigma(i)}$.
\end{remark}
\begin{proposition} \label{prop-projection-coextensive-closed-under-product-projections}
	If $X$ is an $\M$-coextensive object in $\C$ and $p: X \rightarrow A$ any product projection of $X$, then $A$ is $\M$-coextensive. 
\end{proposition}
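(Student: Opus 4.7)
The plan is to lift data for the $\M$-coextensivity of $A$ to corresponding data on $X$, exploiting the decomposition $X\cong A\times B$ afforded by the complement $q:X\rightarrow B$ of $p$. The key observation is that any $\M$-morphism $\alpha:A\rightarrow Y$ admits a canonical lift $(\alpha p, q):X\rightarrow Y\times B$, which under $X\cong A\times B$ corresponds to $\alpha\times 1_B$ and therefore lies in $\M$ by closure under products and isomorphisms.

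The crucial preliminary step is a \emph{small lift}: for each $\M$-morphism $\alpha:A\rightarrow Y$, apply $\M$-coextensivity of $X$ to the diagram
\[
\xymatrix{
	A \ar[d]_{\alpha} & X \ar[l]_{p} \ar[d]^{(\alpha p, q)} \ar[r]^{q} & B \ar[d]^{1_B} \\
	Y & Y\times B \ar[l]^{\pi_Y} \ar[r]_{\pi_B} & B
}
\]
whose top and bottom rows are product diagrams for $X$ and $Y\times B$ respectively, and whose vertical morphisms all lie in $\M$ (using Remark~\ref{rem-prod-projection-in-M} to obtain $B\rightarrow 1\in\M$, which together with closure under products and isomorphisms handles the middle vertical). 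This forces both squares to be $\M$-pushouts; in particular $\pi_Y\in\M$ and the left square is a pushout in $\C$. Combined with Proposition~\ref{prop-composite-proposition} this yields that every product projection of $A$ lies in $\M$; combined with the fact that $p$ is epi (Proposition~\ref{prop-product-projections-epimorphisms}), it lets me descend the $\M$-pushout of $\alpha p$ along $p_1 p$ in $X$ to an $\M$-pushout of $\alpha$ along $p_1$ in $A$, thereby verifying condition (i) of Definition~\ref{def-M-coextensive} for $A$.

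For condition (ii), given a commutative diagram on $A$ as in Definition~\ref{def-M-coextensive}, I would extend it to the \emph{big lift}
\[
\xymatrix{
	A_1 \ar[d]_{\alpha_1} & X \ar[l]_{p_1 p} \ar[d]^{(\alpha p, q)} \ar[r]^{(p_2 p, q)} & A_2\times B \ar[d]^{\alpha_2\times 1_B} \\
	B_1 & Y\times B \ar[l]^{\beta_1\pi_Y} \ar[r]_{\beta_2\times 1_B} & B_2\times B
}
\]
whose top row is a product diagram for $X$ and whose bottom row is a product diagram if and only if the original bottom was. Each big square decomposes horizontally as the paste of the corresponding small-lift pushout (from the preliminary step) with the corresponding $A$-square, so the pushout pasting lemma lets me pass between the $A$-squares being pushouts and the big squares being pushouts. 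Applying $\M$-coextensivity of $X$ to the big lift then delivers the biconditional of (ii) for $A$.

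The main obstacle I anticipate is the $\M$-membership of the bottom morphisms of the $A$-squares: once an $A$-square is known to be a pushout, one still needs $\beta_i\in\M$ for it to qualify as an $\M$-pushout. I would address this by invoking condition (i) for $A$ already established above -- the $\M$-pushout of $\alpha$ along $p_i$ furnished by (i) is isomorphic as a pushout cocone to the $A$-square in question, and the mediating isomorphism conjugates the $\M$-pushout's bottom morphism, which is in $\M$, onto $\beta_i$.
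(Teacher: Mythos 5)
Your overall strategy --- lifting the data on $A$ to data on $X$ through the decomposition $X\cong A\times B$ afforded by a complement $q$ of $p$, applying $\M$-coextensivity of $X$, and descending --- is essentially the paper's, and your treatment of condition (i) (the small lift, together with descent of the $\M$-pushout of $\alpha p$ along $p_1p$ through the epimorphism $p$) is correct. The big lift used for condition (ii), however, has two genuine gaps, both caused by carrying the factor $B$ into the middle \emph{and} right-hand columns. First, the claimed horizontal decomposition of the big squares only works on the left: $p_1p$ factors through $p$, so the left big square really is the paste of the small-lift pushout with the left $A$-square; but the top edge $(p_2p,q):X\to A_2\times B$ of the right big square does \emph{not} factor through $p:X\to A$ (its component $q$ does not), so the right big square is not such a paste. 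Up to the isomorphism $X\cong A\times B$ it is the right $A$-square with the functor $-\times B$ applied to it, and $-\times B$ neither preserves nor reflects pushouts in a general category, so the pasting lemma tells you nothing there. Second, even granting that both big squares are pushouts, $\M$-coextensivity of $X$ only yields that $B_1\xleftarrow{\beta_1\pi_Y}Y\times B\xrightarrow{\beta_2\times 1_B}B_2\times B$ is a product diagram, i.e.\ that $(\beta_1,\beta_2)\times 1_B$ is an isomorphism; this does not imply that $(\beta_1,\beta_2):Y\to B_1\times B_2$ is an isomorphism, since $-\times B$ need not reflect isomorphisms in a general category. So you cannot conclude that the original bottom row is a product diagram.

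Both gaps disappear if you attach the complementary factor to the $A_1$-column only, which is what the paper's proof does: take the top row to be $A_1\times B\xleftarrow{(p_1p,\,q)}X\xrightarrow{p_2p}A_2$, with columns $A_1\times B\rightarrow A_1\xrightarrow{\alpha_1}B_1$, $X\xrightarrow{p}A\xrightarrow{\alpha}Y$ and $A_2\xrightarrow{1_{A_2}}A_2\xrightarrow{\alpha_2}B_2$; the projection $A_1\times B\to A_1$ lies in $\M$ by Proposition~\ref{prop-composite-proposition}. The two upper squares are then $\M$-pushouts because the two rows they span are both product diagrams and all three vertical morphisms between them lie in $\M$; the outer rectangle has top row a product diagram of $X$ and bottom row exactly the original $B_1\leftarrow Y\rightarrow B_2$, with no spurious factor of $B$ to cancel; and the pasting lemma transfers the biconditional of Definition~\ref{def-M-coextensive} between the outer rectangles and the two $A$-squares.
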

\begin{proof}
	Suppose that $A' \xleftarrow{p'} X \xrightarrow{p} A$ and $A_1 \xleftarrow{\pi_1} A \xrightarrow{\pi_2} A_2$ are both product diagrams, where $X$ is an $\M$-coextensive object. Consider the following diagram 
	\[
	\xymatrix{
		A_1 \times A' \ar[d]_{p_1} \ar@{}[dr] | {S_1} & X \ar[r]^-{\pi_2 p} \ar[l]_-{( \pi_1 p, p')} \ar[d]^p &  A_2 \ar[d]^{1_{A_{2}}} \ar[d] \ar@{}[dl] | {S_2} \\
		A_1 \ar[d]_{f_1} \ar@{}[dr] | {S_3} & A \ar[d]^f \ar[r]|{\pi_2} \ar[l]|{\pi_1} & A_2 \ar[d]^{f_2} \ar@{}[dl] | {S_4}\\
		B_1 & B \ar[r]|{\pi_2'} \ar[l]|{\pi_1'} & B_2 
	}
	\]
	where $(\pi_1 p, p')$ is the morphism induced into the product $A_1 \times A'$. Note that both $(\pi_1 p, p')$ and $\pi_1 p$ are product projections of $X$, and hence both are members of $\M$ by Remark~\ref{rem-prod-projection-in-M}. By Proposition~\ref{prop-composite-proposition}, $p_1$ is a member of $\M$ since $p_1(\pi_1 p, p') = \pi_1 p \in \M$. Since the top and middle rows are product diagrams and $p_1,p, 1_A$ are morphisms in $\M$, both the squares $S_1$ and $S_2$ are $\M$-pushout diagrams, since $X$ is $\M$-coextensive. Now, suppose that $f_1, f_2, f$ are morphisms from $\M$, then we will show that $S_3$ and $S_4$ are $\M$-pushouts if and only if the bottom row is a product diagram. If the bottom row is a product diagram, then both $S_1 + S_3$ and $S_2 + S_4$ are pushout diagrams by $\M$-coextensivity. It then follows from a general fact that since $S_1 + S_3$ and $S_1$ are pushouts, that $S_3$ is a pushout. Similarly, $S_4$ is a pushout. Conversely, if $S_3$ and $S_4$ are pushouts, then both $S_1 + S_3$ and $S_2 + S_4$ are pushouts, which implies that the bottom row is a product diagram by $\M$-coextensivity of $X$. Finally, the pushout of a morphism in $\M$ with domain $A$ along a product projection of $A$ always exists because the right-hand vertical morphism in $S_2$ is the identity and $f$ and $\pi_2$ were arbitrary. 
\end{proof}
\noindent
\section{A characterization of projection-coextensivity}
In what follows, we are aiming at showing that, under suitable conditions, $X$ is projection-coextensive if and only if $\Proj{X}$ is a Boolean lattice, which captures in a categorical way the classical characterization of the strict refinement property in \cite{CJT64}. The ``only if'' part of this statement is a direct analogue of one of the main results of Section 5 in \cite{Manes1992}, which shows that summands in a Boolean category (see Definition~\ref{def:Boolean-category} below for the dual) form a Boolean lattice. Proposition~\ref{prop-complements-unique} and Proposition~\ref{prop-order-reversing} are also analogues of Lemma 5.9 and the beginning part of the proof of Theorem~5.11, respectively,  in \cite{Manes1992}. We will conclude this section by showing that if $\C$ is a co-Boolean category, then every object is projection-coextensive, and that there are categories with finite products in which every object is projection-coextensive, and are not co-Boolean. However, if $\C$ has pushout stable product projections, then $\C$ is co-Boolean if and only if every object is projection-coextensive. 
\begin{definition}
Let $X$ be an object in a category $\C$, then by $\mathrm{Proj}_{\C}(X)$ we mean the full subcategory of $X\downarrow \C$ consisting of product projections of $X$. If $X$ has epimorphic product projections, then $\mathrm{Proj}_{\C}(X)$ is a preorder, so that in particular, if $X$ is projection-coextensive then $\mathrm{Proj}_{\C}(X)$ is a preorder (see Proposition~\ref{prop-product-projections-epimorphisms}). In what follows we shall write $\Proj{X}$ for the posetal-reflection of $\mathrm{Proj}_{\C}(X)$ when $X$ has epimorphic product projections. 
\end{definition}
\begin{remark} \label{rem-joins-of-projections}
Let $X$ be a projection-coextensive object in a category $\C$. If $X \xrightarrow{p} A$ is a product projection, then we shall write $[p]$ for the element of $\Proj{X}$ that $p$ represents. For product projections $p,q$ of $X$, note that $[p] \leqslant [q]$ if and only if $q$ factors through $p$. Note that the join $[p] \vee [q]$ exists, and is represented by the diagonal morphism in any pushout of $p$ along $q$. Then $\Proj{X}$ is bounded with top element $[X \rightarrow 1]$ and bottom element $[1_X]$.
\end{remark}
 
\begin{proposition} \label{prop-complements-unique}
Let $X$ be a projection-coextensive object in a category $\C$, and let $[p] \in \Proj{X}$ be any element. Then for any two complements $q,q'$ of $p$, we have $[q] = [q']$ in $\Proj{X}$. 
\end{proposition}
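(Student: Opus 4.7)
The plan is to show that $q$ and $q'$ each factor through the other; by the description of the order on $\Proj{X}$ in Remark~\ref{rem-joins-of-projections}, this will give $[q]=[q']$. The key idea is to feed the product diagram $A\xleftarrow{p} X\xrightarrow{q} B$ and the product projection $q'$ into the $\M$-coextensivity machinery, and to exploit the fact—already established in Proposition~\ref{prop-product-projections-epimorphisms}—that the pushout of the complementary product projections $p$ and $q'$ is terminal.

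Concretely, I would first form the $\M$-pushouts of $p$ along $q'$ and of $q$ along $q'$ (which exist because $X$ is projection-coextensive), obtaining a commutative diagram
\[
\xymatrix{
A \ar[d]_{a} & X \ar[l]_{p} \ar[r]^{q} \ar[d]^{q'} & B \ar[d]^{b} \\
A' & B' \ar[l]^{b_1} \ar[r]_{b_2} & P
}
\]
in which all vertical morphisms belong to $\M$ (directly from the definition of an $\M$-pushout, together with the fact that $q'$ is itself a product projection) and both squares are $\M$-pushouts. Since the top row is a product diagram and $X$ is projection-coextensive, the bottom row is also a product diagram. Because $p$ and $q'$ are complementary, Proposition~\ref{prop-product-projections-epimorphisms} forces $A'$ to be terminal, so in the bottom product one factor is terminal and hence $b_2$ is an isomorphism. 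Commutativity of the right-hand square then gives $q' = b_2^{-1}b\,q$, so $q'$ factors through $q$.

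Running the identical argument with the roles of $q$ and $q'$ interchanged produces a factorization of $q$ through $q'$. By Remark~\ref{rem-joins-of-projections} the two factorizations translate into $[q]\leqslant[q']$ and $[q']\leqslant[q]$ in $\Proj{X}$, and hence $[q]=[q']$. I do not expect a serious obstacle here; the only delicate point is checking that all four vertical morphisms in the diagram really belong to $\M$ so that the ``only if'' direction of Definition~\ref{def-M-coextensive} applies, and this is immediate from the definition of an $\M$-pushout.
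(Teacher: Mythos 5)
Your proof is correct and follows essentially the same route as the paper's: push the product diagram $A \xleftarrow{p} X \xrightarrow{q} B$ out along $q'$, use co-disjointness of products (Proposition~\ref{prop-product-projections-epimorphisms}) to see that the pushout of $p$ along $q'$ is terminal, and read off that the remaining projection of the resulting product diagram is an isomorphism. The paper packages everything into a single $3\times 3$ diagram in which both comparison maps $\alpha$ and $\beta$ are seen to be isomorphisms at once, whereas you run the one-sided argument twice and invoke antisymmetry of the order on $\Proj{X}$; the difference is purely organizational.
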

\begin{proof}
Suppose that $X \xrightarrow{p} A$ is any product projection with complements $X \xrightarrow{q} B$ and $X \xrightarrow{q'} B'$. By Proposition~\ref{prop-product-projections-epimorphisms} we may form the following diagram where every square is a pushout, and every edge a product diagram.
\[
\xymatrix{
\bullet & A \ar[r] \ar[l] & 1\\
A \ar[u] \ar[d]  & \ar[l]_{p} X \ar[d]^{q'} \ar[u]_{p} \ar[r]^{q} & B\ar[u] \ar[d]^{\beta} \\
1 & B' \ar[l] \ar[r]_{\alpha} & \bullet
}
\]
It then follows that $\alpha$ and $\beta$ are isomorphisms so that $[q] = [q']$. 
\end{proof}
\begin{corollary} \label{cor-terminal}
Given any projection-coextensive object $X$ in a category $\C$, and any product diagram $A \xleftarrow{a} X \xrightarrow{b} B$ in $\C$. If $a$ is an isomorphism then $B$ is a terminal object. 
\end{corollary}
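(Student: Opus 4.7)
My plan is to apply Proposition~\ref{prop-complements-unique} to the product projection $a$, by exhibiting both $b$ and the terminal morphism $!_X : X \to 1$ as complements of $a$, and then to deduce that $B$ must be isomorphic to $1$.

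The first step is to verify that $A \xleftarrow{a} X \xrightarrow{!_X} 1$ is a product diagram. Since $a$ is an isomorphism, any pair $(f : Y \to A,\, g : Y \to 1)$ factors uniquely through this cospan via $a^{-1} f : Y \to X$, with $g$ forced by the universal property of $1$. Hence $!_X$ is a complement of $a$ in the sense defined just before Proposition~\ref{prop-product-projections-epimorphisms}, and Proposition~\ref{prop-complements-unique} applied to the two complements $b$ and $!_X$ yields $[b] = [!_X]$ in $\Proj{X}$.

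The last step is to unpack this equality via Remark~\ref{rem-joins-of-projections}: there exist morphisms $\phi : B \to 1$ and $\psi : 1 \to B$ satisfying $\phi b = !_X$ and $\psi\, !_X = b$. Now $\phi\psi = 1_1$ automatically because $1$ is terminal, while $\psi\phi b = \psi\, !_X = b$ together with the fact that $b$ is epimorphic (Proposition~\ref{prop-product-projections-epimorphisms}) forces $\psi\phi = 1_B$. Thus $B \cong 1$, so $B$ is a terminal object. I do not foresee any real obstacle; the only conceptually non-routine point is the initial observation that an isomorphism admits the terminal morphism as a complement, after which the corollary reduces to a direct application of Proposition~\ref{prop-complements-unique}.
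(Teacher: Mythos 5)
Your proof is correct and follows exactly the route the paper intends: the corollary is stated without proof immediately after Proposition~\ref{prop-complements-unique}, and the implicit argument is precisely yours, namely that $b$ and the terminal morphism $X\to 1$ are both complements of the isomorphism $a$, so $[b]=[X\to 1]$ in $\Proj{X}$, whence $B\cong 1$ using that $b$ is epic. All the supporting details you supply (that $A \xleftarrow{a} X \to 1$ is a product diagram, and the unpacking of equality in the posetal reflection) are accurate.
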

\noindent
Let $X$ be a projection-coextensive object in a category $\C$, then for any $[p] \in \Proj{X}$  there exists unique \emph{complement} $[p]^{\bot} \in \Proj{X}$ with members of $[p]^{\bot}$ complements of members of $[p]$ (Proposition~\ref{prop-complements-unique}). We always denote the complement of $[p]$ by $[p]^{\bot}$ in what follows.
\begin{proposition} \label{prop-order-reversing}
For a projection-coextensive object $X$ in a category $\C$, the map $[p]\longmapsto [p]^{\bot}$ is order-reversing. 
\end{proposition}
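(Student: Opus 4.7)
The plan is to show that if $[p] \leq [q]$ in $\Proj{X}$, then $p^\bot$ factors through $q^\bot$, which by the factorization characterization of the order on $\Proj{X}$ yields $[q]^\bot \leq [p]^\bot$. Fix representatives $X \xrightarrow{p} A$ and $X \xrightarrow{q} B$ with complements $X \xrightarrow{p^\bot} A'$ and $X \xrightarrow{q^\bot} B'$. The hypothesis $[p] \leq [q]$ gives a factorization $q = hp$ for some morphism $h : A \to B$, which is unique since $p$ is epimorphic by Proposition~\ref{prop-product-projections-epimorphisms}.

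The first step would be to apply Proposition~\ref{prop-composite-proposition} to the equation $hp = q$: since $p$ is a product projection of $X$ and $q = hp$ is a product projection of $X$ (hence lies in $\M$, where $\M$ is the class of product projections), it follows that $h$ itself is a product projection of $A$. By Proposition~\ref{prop-projection-coextensive-closed-under-product-projections}, $A$ is projection-coextensive, so $h$ admits a complement $h^\bot : A \to A_2$ fitting into a product diagram $B \xleftarrow{h} A \xrightarrow{h^\bot} A_2$.

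The second step is to combine the product diagrams $A \xleftarrow{p} X \xrightarrow{p^\bot} A'$ and $B \xleftarrow{h} A \xrightarrow{h^\bot} A_2$ to exhibit $X$ as a ternary product, with projections $(q, h^\bot p, p^\bot) : X \to B \times A_2 \times A'$. This is a standard fact about iterated products. From this ternary product, the morphism $(h^\bot p,\, p^\bot) : X \to A_2 \times A'$ is a complement of $q$. By Proposition~\ref{prop-complements-unique}, it represents the same class as $q^\bot$ in $\Proj{X}$, so there is an isomorphism $\phi : B' \to A_2 \times A'$ with $\phi q^\bot = (h^\bot p,\, p^\bot)$. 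Composing with the projection $\pi_{A'} : A_2 \times A' \to A'$ then gives $p^\bot = (\pi_{A'} \phi) q^\bot$, i.e. $p^\bot$ factors through $q^\bot$, which is precisely the inequality $[q]^\bot \leq [p]^\bot$.

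The argument is really a chain of applications of the preceding propositions, and I do not foresee a genuine obstacle. The only point requiring minor care is verifying the ternary product claim in the second step, which is routine from the universal properties of the two binary products involved.
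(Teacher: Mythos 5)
Your proof is correct, but it takes a genuinely different route from the paper's. The paper argues entirely inside the pushout calculus it has set up for $\Proj{X}$: it forms the grid obtained by pushing the product diagram $A \xleftarrow{\pi_1} X \xrightarrow{\pi_2} A'$ out along both legs of $B \xleftarrow{\pi_1'} X \xrightarrow{\pi_2'} B'$, observes that a factorization $\pi_1' = d\,\pi_1$ forces the pushout of $\pi_1$ along $\pi_1'$ to be $B$ itself (the comparison map $\gamma$ is invertible because $\pi_1$ is epic), then invokes Proposition~\ref{prop-projection-coextensive-closed-under-product-projections} and Corollary~\ref{cor-terminal} to conclude that the adjacent pushout object is terminal, and reads off from the resulting product diagram that $\pi_2$ factors through $\pi_2'$. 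You instead refine the product decomposition directly: from $q = hp$ you use Proposition~\ref{prop-composite-proposition} to see that $h$ is itself a product projection, split $A$ as $B \times A_2$, assemble the ternary product $X \cong B \times A_2 \times A'$, and appeal to uniqueness of complements (Proposition~\ref{prop-complements-unique}) to identify $[q]^{\bot}$ with $[(h^{\bot}p,\,p^{\bot})]$, through which $p^{\bot}$ visibly factors. Your route avoids Corollary~\ref{cor-terminal} and all explicit pushouts (beyond the one hidden in Proposition~\ref{prop-composite-proposition}), and is closer in spirit to the classical factor-congruence argument; the paper's version stays within the join-as-pushout formalism on which the rest of Section~3 is built. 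Two minor economies: projection-coextensivity of $A$ is not needed to produce $h^{\bot}$, since a product projection has a complement by definition; and you do not need $\phi$ to be an isomorphism, as any factorization $(h^{\bot}p,\,p^{\bot}) = \phi\, q^{\bot}$ already yields $p^{\bot} = \pi_{A'}\phi\, q^{\bot}$.
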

\begin{proof}
The proof amounts to showing that in the diagram  
\[
\xymatrix{
	C_1 & B \ar[r] \ar[l]_{\gamma}  & C_2 \\
	A \ar@{..>}[ur]\ar[u] \ar[d]  & \ar[l]_{\pi_1} X \ar[d]^{\pi'_2} \ar[u]_{\pi'_1} \ar[r]^{\pi_2} & A'\ar[u] \ar[d]^{\beta} \\
	C_3 & B' \ar[l] \ar[r]_{\alpha} & C_4
}
\]
where the central column and central row are product diagrams and each square is a pushout, if the dotted arrow exists making the upper right-hand triangle commute, then $\pi_2$ factors through $\pi'_2$. Suppose that the dotted arrow exists, so that $\pi_1'$ factors through $\pi_1$. Then since $\pi_1$ is an epimorphism (Proposition~\ref{prop-product-projections-epimorphisms}), the upper left-hand triangle commutes, and therefore $\gamma$ is an isomorphism. The object $B$ is projection-coextensive by Proposition~\ref{prop-projection-coextensive-closed-under-product-projections}, so that by Corollary~\ref{cor-terminal} it follows that $C_2$ is terminal. Finally, this implies that $\beta$ is an isomorphism, since the right-hand edge is a product diagram, and hence $\pi_2$ factors through $\pi'_2$ via $\beta^{-1}$.
\end{proof}
\begin{corollary} \label{cor-projection-coextensive-implies-boolean-algebra}
Let $\C$ be a category and $X$ a projection-coextensive object in $\C$, then $\Proj{X}$ is a Boolean lattice.
\end{corollary}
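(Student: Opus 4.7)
The plan is to verify that $\Proj{X}$ carries the structure of an orthocomplemented bounded lattice, and then to deduce distributivity.

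First I would establish the lattice and orthocomplement structure. The map $[p] \mapsto [p]^{\bot}$ is well-defined by Proposition~\ref{prop-complements-unique}, order-reversing by Proposition~\ref{prop-order-reversing}, and involutive, since the defining relation of a complement (two projections forming a product diagram) is symmetric in its arguments. Joins already exist by Remark~\ref{rem-joins-of-projections}, and in a bounded poset with joins, an order-reversing involution automatically supplies meets via De Morgan, $[p] \wedge [q] := ([p]^{\bot} \vee [q]^{\bot})^{\bot}$. For the orthocomplementation axioms, I would note that the pushout of $p$ along its complement $p^{\bot}$ is terminal by co-disjoint products (Proposition~\ref{prop-product-projections-epimorphisms}), so the diagonal of this pushout represents $[X \to 1] = \top$; hence $[p] \vee [p]^{\bot} = \top$, and applying $(-)^{\bot}$ gives $[p] \wedge [p]^{\bot} = \bot$. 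This exhibits $\Proj{X}$ as a uniquely complemented ortholattice.

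For distributivity, my plan is to apply the strict refinement property (Theorem~\ref{thm-projection-coextensive-implies-strict-refinement}) directly to any three elements. Given $[p_1], [p_2], [p_3] \in \Proj{X}$, iterating strict refinement on the three binary decompositions $(p_k, p_k^{\bot})$ of $X$ produces a common finite product decomposition $X \cong \prod_{s \in S} X_s$ (one can take $|S|=8$) in which each $[p_k]$ is realised as the product projection onto the sub-product indexed by some $T_k \subseteq S$. One then checks that the sublattice of $\Proj{X}$ generated by $[p_1], [p_2], [p_3]$ injects into the Boolean algebra $2^S$, with joins, meets and complements in $\Proj{X}$ matching intersections, unions and set-complements of the $T_k$ respectively. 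Distributivity in $\Proj{X}$ is then inherited from that of $2^S$.

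The main obstacle I expect is the bookkeeping in this last step: verifying that the abstract lattice operations on the sublattice generated by $[p_1], [p_2], [p_3]$ really do coincide with the corresponding set operations on the $T_k$ under the refinement isomorphism. A more abstract alternative would be to invoke a lattice-theoretic fact that an ortholattice whose orthocomplement is the unique lattice-theoretic complement of every element is necessarily Boolean, if available at the required generality; but the direct strict-refinement approach is more self-contained and matches the spirit of Propositions~\ref{prop-complements-unique} and~\ref{prop-order-reversing}, which were themselves proved by direct manipulation of product diagrams.
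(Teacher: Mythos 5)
Your first paragraph reproduces the paper's own argument: the paper likewise combines Propositions~\ref{prop-complements-unique} and~\ref{prop-order-reversing} to obtain the order-reversing involution $[p]\mapsto[p]^{\bot}$ and defines meets by the De Morgan formula $[p]\wedge[q]=([p]^{\bot}\vee[q]^{\bot})^{\bot}$. Where you genuinely diverge is the distributivity step. The paper takes the ``abstract alternative'' you mention at the end: it asserts that $\Proj{X}$ is \emph{uniquely} complemented (any $[q]$ with $[p]\vee[q]=[X\to 1]$ and $[p]\wedge[q]=[1_X]$ must equal $[p]^{\bot}$) and then cites Theorem~6.5 of \cite{Blyth2005} to conclude distributivity. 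Your route instead uses Theorem~\ref{thm-projection-coextensive-implies-strict-refinement} directly: refine the three decompositions $(p_k,p_k^{\bot})$ to a common eight-fold product $X\cong\prod_{s\in S}X_s$, realise each $[p_k]$ as $[p_{T_k}]$ for some $T_k\subseteq S$, and transport distributivity from $2^S$. This does work: coextensivity gives $[p_T]\vee[p_{T'}]=[p_{T\cap T'}]$ (push out the product diagram $\prod_{T}\leftarrow X\to\prod_{S\setminus T}$ along $p_{T'}$), Proposition~\ref{prop-complements-unique} gives $[p_T]^{\bot}=[p_{S\setminus T}]$, and hence $[p_T]\wedge[p_{T'}]=[p_{T\cup T'}]$; so $T\mapsto[p_T]$ is a lattice anti-homomorphism from $2^S$ whose image is a sublattice containing the one generated by the three given elements, and distributivity (an identity in three variables) is inherited. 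Two small corrections to your write-up: you need a surjective homomorphism \emph{from} $2^S$ onto that sublattice rather than an injection into $2^S$ (the map $T\mapsto[p_T]$ need not be injective if some $X_s$ is terminal, but homomorphic images of distributive lattices are distributive, so nothing is lost); and your assertion that the ortholattice is ``uniquely complemented'' is not justified by the preceding sentences --- fortunately your argument never uses it, whereas the paper's argument depends on it and leaves it as ``straightforward to check''. On balance your approach costs more bookkeeping but is self-contained and close in spirit to the classical strict-refinement arguments of \cite{CJT64}, while the paper's is shorter but leans on an external lattice-theoretic theorem together with the unverified unique-complementation claim.
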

\begin{proof}
By Proposition~\ref{prop-order-reversing} and Proposition~\ref{prop-complements-unique}, the map $[p]\longmapsto [p]^{\bot}$ is an order-reversing involution. This turns $\Proj{X}$ into a lattice, where meets are given by:
\[
[p] \wedge [q] = ([p]^{\bot} \vee [q]^{\bot})^{\bot}.
\]
 It is straight-forward to check that $\Proj{X}$ is uniquely complemented, i.e., for every $[p]$ if $[q]$ is such that $[p] \vee [q] = [X \rightarrow 1]$ and $[p]\wedge [q] = [1_X]$ then $[q] = [p]^{\bot}$, so that by Theorem~6.5 in \cite{Blyth2005}, it follows that $\Proj{X}$ is distributive, and hence a Boolean lattice.
\end{proof}

\begin{proposition} \label{prop-boolean-algebra-implies-projection-coextensive}
Suppose that $\C$ is a category, and let $X$ be an object with co-disjoint products, where the pushout of two product projections of $X$ are again product projections. If $\Proj{X}$ is a Boolean lattice, then $X$ is projection-coextensive. 
\end{proposition}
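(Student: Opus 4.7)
The plan is to verify both clauses of Definition~\ref{def-M-coextensive} for $X$ with $\M$ the class of all product projections of $\C$. The first clause---existence of $\M$-pushouts---is immediate from the hypothesis: the pushout of any two product projections of $X$ exists and its sides are product projections. The substance lies in the biconditional, and my strategy is to transport all relevant conditions into the Boolean lattice $\Proj{X}$ and close by Boolean arithmetic. Throughout, fix a commutative diagram with top row $A_1 \xleftarrow{a_1} X \xrightarrow{a_2} A_2$ a product diagram, vertical product projections $p_1 : A_1 \to B_1$, $p : X \to Y$, $p_2 : A_2 \to B_2$, and bottom row $B_1 \xleftarrow{b_1} Y \xrightarrow{b_2} B_2$; write $\phi := (b_1, b_2) : Y \to B_1 \times B_2$.

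Two observations drive the argument. \textbf{(J)} If $q_1, q_2$ are product projections of $X$, the pushout diagonal $d$ is again a product projection of $X$---each pushout side is a product projection by hypothesis, and the composite of a product projection of $X$ with a product projection of its codomain is a product projection of $X$---and it represents the join $[q_1] \vee [q_2]$ in $\Proj{X}$, exactly by the reasoning of Remark~\ref{rem-joins-of-projections}. \textbf{(M)} If the pairing $(r_1, r_2) : X \to R_1 \times R_2$ of two product projections of $X$ is itself a product projection, then it represents the meet $[r_1] \wedge [r_2]$: the factorisations $r_i = \pi_i \circ (r_1, r_2)$ give $[(r_1, r_2)] \leqslant [r_i]$, and any lower bound $[s] \leqslant [r_i]$ yields $r_i = t_i s$, so that $(r_1, r_2) = (t_1, t_2)s$ factors through $s$. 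Finally, the top row being a product diagram forces $(a_1, a_2)$ to be an isomorphism, so $[a_1] \wedge [a_2] = [1_X] = \bot$, and co-disjointness gives $[a_1] \vee [a_2] = [X \to 1] = \top$.

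For the forward direction, if the squares are pushouts then \textbf{(J)} yields $[b_i p] = [a_i] \vee [p]$, and Boolean distributivity computes $[b_1 p] \wedge [b_2 p] = ([a_1] \wedge [a_2]) \vee [p] = [p]$. But $\phi p = (b_1 p, b_2 p) = (p_1 \times p_2) \circ (a_1, a_2)$ is a product projection of $X$ (an isomorphism composed with $p_1 \times p_2$, the latter being a product projection by closure of $\M$ under products), so \textbf{(M)} gives $[\phi p] = [p]$. The resulting equivalence of projections $\phi p \sim p$ provides an isomorphism $\alpha : Y \to B_1 \times B_2$ with $\alpha p = \phi p$; since $p$ is an epimorphism by Proposition~\ref{prop-product-projections-epimorphisms}, we conclude $\alpha = \phi$, so the bottom row is a product diagram.

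For the backward direction, assume the bottom row is a product, so $\phi$ is an isomorphism and $[\phi p] = [p]$. Then \textbf{(M)} yields $[b_1 p] \wedge [b_2 p] = [p]$, while $[b_1 p] \vee [b_2 p] \geqslant [a_1] \vee [a_2] = \top$. Setting $x_i := [b_i p]$ and $y_i := [a_i] \vee [p]$, we have $y_i \leqslant x_i$ together with $x_1 \wedge x_2 = y_1 \wedge y_2 = [p]$ and $x_1 \vee x_2 = y_1 \vee y_2 = \top$. In the Boolean interval $[[p], \top]$ of $\Proj{X}$, each of $(x_1, x_2)$ and $(y_1, y_2)$ is a pair of mutual complements, and uniqueness of complements in a Boolean lattice combined with $y_i \leqslant x_i$ forces $x_i = y_i$. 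Thus $[b_i p] = [a_i] \vee [p]$, so $b_i p$ represents the pushout diagonal of $a_i$ along $p$ by \textbf{(J)}; a short diagram chase using that $a_i$ and $p$ are epimorphisms then shows the canonical comparison map from the pushout object to $B_i$ is an isomorphism, making both squares pushouts. The main obstacle will be assembling \textbf{(M)} and the Boolean-interval complement argument: each is elementary in itself, but they are needed as the bridge from the external categorical data back to $\Proj{X}$---the only place where Booleanness is available, since no corresponding assumption is made on $\Proj{Y}$ or $\Proj{B_i}$.
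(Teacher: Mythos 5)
Your proposal is correct, and its overall strategy---encoding the pushout/product conditions as lattice identities in $\Proj{X}$ via your join/meet dictionary (J) and (M), then computing---coincides with the paper's; in particular your forward direction is essentially the paper's argument verbatim, sandwiching $[(b_1p,b_2p)]$ between $[p]$ and $([a_1]\vee[p])\wedge([a_2]\vee[p])=[p]$. The genuine difference is in the backward direction. The paper first disposes of the degenerate cases $[b_ip]=[X\to 1]$ separately and then, in the remaining case, exhibits $[p]$, $[a_1]\vee[p]$, $[a_2]\vee[p]$, $[b_1p]$, $[b_2p]$ as a potential pentagon and invokes Birkhoff's forbidden-sublattice characterization of distributivity to force $[b_ip]=[a_i]\vee[p]$. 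You instead note that $([a_1]\vee[p],[a_2]\vee[p])$ and $([b_1p],[b_2p])$ are two complementary pairs in the Boolean interval $[[p],[X\to 1]]$ with one pair componentwise below the other, so uniqueness (equivalently, order-reversal) of complementation forces equality; this eliminates both the case split and the appeal to Birkhoff's theorem. You also make explicit a closing step the paper leaves implicit, namely that $[b_ip]=[a_i]\vee[p]$, together with epimorphy of the pushout diagonal, makes the canonical comparison map from the pushout object to $B_i$ an isomorphism, so that the squares are indeed pushouts. Both arguments consume the hypotheses at the same points (co-disjointness for $[a_1]\vee[a_2]=[X\to 1]$ and epimorphy, pushout-stability of projections for (J), Booleanness for the distributivity/complementation step), so neither is more general; yours is simply a tidier execution of the lattice-theoretic core.
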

\begin{proof}
Note that the join of $[p]$ and  $[q]$ in $\Proj{X}$ is represented by the diagonal in any pushout of $p$ along $q$. Consider the diagram:
\[
\xymatrix{
	A_1 \ar[d]_{p_1} &  X \ar[dr]|{c_2} \ar[dl]|{c_1}\ar[r]^{a_2} \ar[d]^{p} \ar[l]_{a_1} & A_2  \ar[d]^{p_2} \\
	B_1  & \ar[r]_{b_2} B  \ar[l]^{b_1} & B_2
}
\]
where $a_1,a_2$ are complementary product projections, and $p_1,p,p_2$ are product projections. Note that $[a_1] \wedge [a_2] = [1_X]$ and $[a_1] \vee [a_2] = [X \rightarrow 1]$, since $X$ has co-disjoint products. Suppose that the two squares above are pushouts. Since $A_1 \xrightarrow{p_1} B_1$ and $A_2 \xrightarrow{p_2} B_2$ are product projections, the morphism $(c_1,c_2):X \rightarrow B_1 \times B_2$ is a product projection. By the universal property of product, it follows that $[p] \leqslant [(c_1,c_2)]$ in $\Proj{X}$. Also, we have $[(c_1,c_2)] \leqslant [c_1]$ and $[(c_1,c_2)] \leqslant [c_2]$, but since $[c_1] = [a_1] \vee [p]$ and $[c_2] = [a_2] \vee [p]$, it follows that
\[
[(c_1,c_2)] \leqslant ([a_1] \vee [p]) \wedge ([a_2] \vee [p]) = [p],
\]
and thus $[(c_1, c_2)] = [p]$ so that the bottom row is a product diagram. Now suppose that the bottom row is a product diagram, then we must have that $[c_1] \wedge [c_2] = [p]$. If $[c_1] = [X\rightarrow 1]$ then $b_2$ is an isomorphism, and since $a_2$ is an epimorphism, the right-hand square is a pushout. Moreover, $[a_2] \leqslant [p]$ so that $[a_1] \vee [p] = [X \rightarrow 1]$, so that the left-hand square is a pushout. Thus, in the case that $[c_1] = [X \rightarrow 1]$ both of the squares above are pushouts, and similarly if $[c_2] = [X \rightarrow 1]$. Now suppose that $[c_1], [c_2] < [X \rightarrow 1]$. Note that since $[a_1] \leqslant [c_1]$ and $[a_2] \leqslant [c_2]$ we have that $[c_1] \vee [c_2] = [X \rightarrow 1]$. In particular, this shows that $[c_1]$ is not comparable to $[c_2]$ (since if it were $[c_1] \vee [c_2]$ it would be either $[c_1]$ or $[c_2]$). 

Consider the elements $[a_1] \vee [p]$ and $[a_2] \vee [p]$. By the universal property of pushout, we have that $[a_1] \vee [p] \leqslant [c_1]$ and $[a_2] \vee [p] \leqslant [c_2]$. If $[a_1] \vee [p] \leqslant [c_2]$, then we would have that $[c_2] \geqslant ([a_1] \vee [p]) \vee ([a_2] \vee [p]) = [X \rightarrow 1]$, so that $[c_2] = [X \rightarrow 1]$, which contradicts the assumption, so that $[a_1] \vee [p]$ is not comparable to $c_2$ and similarly $[a_2] \vee [p]$ is not comparable to $[c_1]$. Since $[c_1] \wedge [c_2] = [p]$ and $([a_1] \vee [p]) \vee ([a_2] \vee [p]) = [X \rightarrow 1]$, the sublattice of $\Proj{X}$ generated by the elements $[a_1] \vee [p], [a_2] \vee [p], [p],[c_1],[c_2]$ is given by the Hasse diagram:
\[
\xymatrix{
& [X \rightarrow 1] & \\
[c_1] \ar@{-}[ur] & & [c_2] \ar@{-}[ul] \\
[a_1] \vee [p] \ar@{..}[u] & & [a_2] \vee [p] \ar@{..}[u] \\
& [p] \ar@{-}[ul] \ar@{-}[ur]& 
}
\]
where the solid lines indicate strict inequality. By Birkhoff's characterization of distributive lattices, $\Proj{X}$ cannot contain sublattices isomorphic to the pentagon. This forces $[c_1] = [a_1] \vee [p]$ and $[c_2] = [a_2] \vee [p]$. 
\end{proof}

\noindent
As a consequence of Corollary~\ref{cor-projection-coextensive-implies-boolean-algebra} and Proposition~\ref{prop-boolean-algebra-implies-projection-coextensive} we have the following characterization of projection-coextensivity. 
\begin{theorem} \label{thm-projection-coextensive-characterization}
Suppose that $\C$ is a category with finite products and let $X$ be any object with co-disjoint products, then the following are equivalent: 
\begin{enumerate}[(i)]
\item $X$ is projection-coextensive. 
\item The pushout of any two product projections of $X$ are product projections, and $\Proj{X}$ is a Boolean lattice.
\end{enumerate}
\end{theorem}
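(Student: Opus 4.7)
The plan is to prove the equivalence by splitting it into its two implications and recognizing that each is essentially one of the two immediately preceding results; the theorem functions as the clean packaging of Corollary~\ref{cor-projection-coextensive-implies-boolean-algebra} and Proposition~\ref{prop-boolean-algebra-implies-projection-coextensive}, and little more is needed beyond citing them.

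For (i) $\Rightarrow$ (ii), the Boolean-lattice half is literally Corollary~\ref{cor-projection-coextensive-implies-boolean-algebra}, so the only thing left to verify is that the pushout of two product projections $p: X \to A$ and $q: X \to B$ has its opposite edges $A \to P$ and $B \to P$ again as product projections. I would read this directly off the existence clause of Definition~\ref{def-M-coextensive}: with $\M$ the class of product projections, projection-coextensivity of $X$ guarantees that $q \in \M$ admits an $\M$-pushout along $p$, i.e., a pushout square in which the two opposite edges belong to $\M$. Since any pushout is determined up to a canonical isomorphism at the pushout object, and the composite of an isomorphism with a product projection is again a product projection, every pushout of $p$ along $q$ has the required form.

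For (ii) $\Rightarrow$ (i), I would simply invoke Proposition~\ref{prop-boolean-algebra-implies-projection-coextensive}: its three hypotheses --- that $X$ has co-disjoint products, that pushouts of product projections of $X$ are product projections, and that $\Proj{X}$ is a Boolean lattice --- coincide exactly with the standing assumption of the theorem together with the two clauses of (ii), and its conclusion is precisely projection-coextensivity of $X$. I do not expect any real obstacle here: the substantive content of the equivalence sits inside the two results already established, and the forward direction's pushout clause is essentially unwinding what the definition of $\M$-coextensivity says when $\M$ is specialised to the class of product projections.
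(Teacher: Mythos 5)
Your proposal is correct and matches the paper exactly: the paper states this theorem as an immediate consequence of Corollary~\ref{cor-projection-coextensive-implies-boolean-algebra} and Proposition~\ref{prop-boolean-algebra-implies-projection-coextensive}, with no further argument given. Your extra remark unwinding the existence clause of Definition~\ref{def-M-coextensive} to get the pushout half of (ii) is a harmless (and correct) elaboration of what the paper leaves implicit.
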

\noindent
The following definition is the dual of Definition~5.1 in \cite{Manes2006} (see also \cite{Manes1992} for the original definition of a Boolean category).
\begin{definition} \label{def:Boolean-category}
	A category $\C$ with finite products is \emph{co-Boolean} if it satisfies the following: 
	\begin{enumerate}
		\item For every product projection $p:X \rightarrow A$ and any morphism $f:X \rightarrow Y$ there exists a pushout square 
		\[
		\xymatrix{
	X \ar[r]^p \ar[d]_f  & A \ar[d] \\
	Y \ar[r]_q & P 	
	}
		\]
		with $q$ a product projection. 
		\item Product projections pushout product diagrams to product diagrams.
		\item If $X \xleftarrow{1_X} X \xrightarrow{1_X} X$ is a product diagram, then $X$ is a terminal object.
	\end{enumerate}
\end{definition}

\begin{corollary} \label{cor:Boolean-category}
	Every object in a co-Boolean category is projection-coextensive. 
\end{corollary}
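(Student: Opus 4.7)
The plan is to apply Theorem~\ref{thm-projection-coextensive-characterization}, which reduces showing that an arbitrary object $X$ in a co-Boolean category $\C$ is projection-coextensive to verifying three conditions: (a) $X$ has co-disjoint products, (b) the pushout of any two product projections of $X$ consists of product projections, and (c) $\Proj{X}$ is a Boolean lattice.

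For (b), I would apply condition (1) of Definition~\ref{def:Boolean-category} symmetrically: given product projections $p$ and $q$ of $X$, condition (1) with $p$ as the projection and $q$ as the arbitrary morphism produces a pushout whose leg opposite $p$ is a product projection, and applying the same condition with the roles of $p$ and $q$ swapped yields the other leg as a product projection. For (a), I would push the product diagram $A \xleftarrow{p} X \xrightarrow{q} B$ of a pair of complementary projections along $p$ itself via condition (2), obtaining a product diagram $P_L \xleftarrow{a_L} A \xrightarrow{a_R} P_R$ in the bottom row, so that $a_L$ is a product projection of $A$. The identity cocone over the pushout of $p$ along $p$ furnishes a retraction $r: P_L \to A$ of $a_L$, making $a_L$ a split-mono product projection; I would then argue that such a split-mono product projection forces its complementary factor $P_R$ to be terminal. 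Since $P_R$ is precisely the pushout of $p$ and $q$, this yields one half of co-disjointness, and a parallel argument extracts the epicness of product projections. For (c), the arguments of Propositions~\ref{prop-complements-unique} and \ref{prop-order-reversing} and Corollary~\ref{cor-projection-coextensive-implies-boolean-algebra} would adapt directly once (a) and (b) are secured, since the role that projection-coextensivity plays in those proofs is to guarantee $\M$-pushouts with product-preserving properties, which conditions (1) and (2) of Definition~\ref{def:Boolean-category} supply directly; alternatively, (c) is the dual of Theorem~5.11 in \cite{Manes1992}.

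The hardest step will be the ``split-mono product projection forces a terminal complement'' step in (a): intuitively this is obvious (as in $\Set$, by cardinality), but categorically it is precisely where the non-degeneracy axiom (3) of Definition~\ref{def:Boolean-category} is essential. I expect the argument to proceed by noting that the retraction $r$ yields a morphism $r_2 = a_R r : P_L \to P_R$ satisfying $r_2 a_L = a_R$, so that $a_R$ factors through $a_L$, and then invoking condition (3) in an appropriate diagrammatic form to collapse $P_R$ to the terminal object.
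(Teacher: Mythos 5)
Your overall strategy coincides with the paper's: both reduce the corollary to Theorem~\ref{thm-projection-coextensive-characterization} by verifying that every object has co-disjoint products, that pushouts of pairs of product projections consist of product projections (your symmetric application of axiom (1) is fine and is the right way to discharge this), and that $\Proj{X}$ is a Boolean lattice. The difference is that the paper discharges the two substantive hypotheses by citation --- co-disjointness is Lemma~5.2 of \cite{Manes1992} and the Boolean lattice structure is Theorem~5.11 of \cite{Manes1992} --- whereas you attempt to re-derive co-disjointness from the axioms of Definition~\ref{def:Boolean-category}.

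That re-derivation is where the gap lies, and you have flagged it yourself without closing it. The lemma you isolate --- ``a split-mono product projection forces its complementary factor to be terminal'' --- is not a general fact about categories with finite products (in $\Set$, the projection $\emptyset \times D \to \emptyset$ is an isomorphism, hence split mono, for any $D$), so it cannot be waved through as ``intuitively obvious''; it must be extracted from the co-Boolean axioms, and your sketch stops exactly at the point where axiom (3) has to enter. Observing that $a_R$ factors through $a_L$ via the retraction does not by itself produce a product diagram of the form $P_R \xleftarrow{1_{P_R}} P_R \xrightarrow{1_{P_R}} P_R$, which is what axiom (3) needs as input; some further construction (in effect, the content of Manes's Lemma~5.2) is required. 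The epimorphism half of co-disjointness is likewise dismissed with ``a parallel argument'' and no detail. Similarly, for (c) you propose to ``adapt'' Propositions~\ref{prop-complements-unique} and~\ref{prop-order-reversing}, but those are proved under the hypothesis of projection-coextensivity, which is the conclusion you are after, so the adaptation would need to be checked rather than asserted; your fallback citation of the dual of Theorem~5.11 in \cite{Manes1992} is the safer route and is what the paper actually does. In short: right skeleton, but the co-disjointness step is a genuine missing argument, and the cleanest repair is to cite Lemma~5.2 of \cite{Manes1992} as the paper does.
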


\begin{proof}
Let $\C$ be a co-Boolean category, then by Lemma~5.2 in \cite{Manes1992} it follows that every object in $\C$ has co-disjoint products. Moreover, every $X$ in $\C$ has $\Proj{X}$ a Boolean-lattice by Theorem~5.11 in \cite{Manes1992}, so that by Theorem~\ref{thm-projection-coextensive-characterization}, every $X$ is projection-coextensive. 
\end{proof}

\begin{remark}
If $\C$ is a category with finite products where product projections are pushout stable, then the converse holds: $\C$ is co-Boolean if and only if every object is projection-coextensive. The necessity of the condition that product projections be pushout stable is illustrated by the full subcategory $\Grp_*$ of $\Grp$ consisting of \emph{centerless} groups. Recall that a group $G$ is centerless if its center $Z(G) = \{x\in G \mid \forall y \in G(xy = yx)\}$ is the trivial subgroup. A product of groups is centerless if and only if each factor is centerless. Every centerless group is projection-coextensive in $\Grp$ as well as in $\Grp_*$ (see Proposition~\ref{prop-centerless-implies-PC}). However, the category $\Grp_*$ is not a co-Boolean category. To see this, note that for any group $G$ the \emph{abelianization} of $G$ may be obtained from the pushout:
\[
\xymatrix{
G \ar[r]^{\Delta_G}  \ar[d] & G \times G \ar[d] \\
0 \ar[r] & G/[G,G]	
}
\]
If $G/[G,G]$ is non-trivial, then since $G/[G,G]$ is abelian, the morphism $G\times G \rightarrow G/[G,G]$ can not be a product projection, if $G$ is centerless. For example, if $G = S_3$ is the symmetric group on 3 elements, then its abelianization is isomorphic to $\mathbb{Z}/2\mathbb{Z}$, and also $S_3$ is centerless. This shows that centerless groups do not form a co-Boolean category, however they do form a category with finite products where every object is projection-coextensive. 
\end{remark}

\section{Projection-coextensivity in regular categories} \label{sec-projection-coextensivity-in-regular-categories}
A category $\C$ is \emph{regular} \cite{BGO71} if it has finite limits, coequalizers of kernel-pairs, and the pullback of a regular epimorphism along any morphism is again a regular epimorphism. Listed below are some elementary facts about morphisms in a regular category $\C$.

\begin{itemize}
\item Every morphism in $\C$ factors into a regular epimorphism followed by a monomorphism. 
\item If $f,g$ are regular epimorphisms in $\C$, then their product $f \times g$ is a regular epimorphism.
\item For any two morphisms $f:X \rightarrow Y$ and $g:Y \rightarrow Z$ in $\C$, if $g   f$ is a regular epimorphism, then $g$ is a regular epimorphism. Also, if $f$ and $g$ are regular epimorphisms, then $g  f$ is a regular epimorphism. 
\end{itemize}

\begin{definition}
	An object $X$ in a regular category $\C$ is said to have \emph{global support} if any terminal morphism $X \rightarrow 1$ is a regular epimorphism. 
\end{definition}

\begin{remark} \label{remark-product-projections-regular}
If $X$ is an object with global support in a regular category $\C$, then any product projection of $X$ is a regular epimorphism. This is because any factor of an object with global support itself has global support, and any product projection of $X$ can be obtained as a pullback of a terminal morphism of one of its factors. 
\end{remark}

\subsection{Subobjects and relations in regular categories} \label{sec-subobjects-relations-regular-categories}

Given any object $X$ in a regular category $\C$, consider the preorder of all monomorphisms with codomain $X$. The posetal-reflection of this preorder is $\Sub(X)$ -- the poset of \emph{subobjects} of $X$. For any morphism $f:X \rightarrow Y$ in $\C$ there is an induced Galois connection
\[
\xymatrix{
\Sub(X) \ar@/^1pc/[r]^{f_*}  &  \Sub(Y) \ar@/^1pc/[l]^{f^*}
}
\]
which is defined as follows. Given a subobject $A$ of $X$ represented by a monomorphism $A_0 \xrightarrow{a} X$, $f_*$ is defined to be the subobject represented by the monomorphism part of a (regular epi, mono)-factorization of $f   a$. Given a subobject $B$ of $Y$ represented by a monomorphism $B_0 \xrightarrow{b} Y$, we define $f^*(B)$ to be the subobject of $X$ represented by the monomorphism obtained from pulling back $b$ along $f$. A \emph{relation} from $X$ to $Y$ is a subobject of $X \times Y$, i.e., an isomorphism class of monomorphisms with codomain $X \times Y$. In regular categories we can \emph{compose} relations as follows: given a relation $R$ from $X$ to $Y$ and a relation $S$ from $Y$ to $Z$, and two representatives  $(r_1, r_2):R_0 \rightarrow X \times Y$ and $(s_1,s_2):S_0 \rightarrow Y \times Z$ of $R$ and $S$ respectively, form the pullback of $s_1$ along $r_2$:
\[
\xymatrix{
P \ar[r]^{p_2} \ar[d]_{p_1} & S_0 \ar[d]^{s_1} \\
R_0 \ar[r]_{r_2} & Y
}
\]
Then $R \circ S$ is defined to be the relation represented by the monomorphism part of any regular-image factorization of $(r_1 p_1, s_2p_2):P \rightarrow X \times Z$.
\begin{definition}
In what follows we will write $\Eq{X}$ for the poset of all equivalence relations on an object $X$. Recall that an equivalence relation $E$ is said to be \emph{effective} if it represented by a kernel pair of some morphism. We will write $\Ef{X}$ for the poset of all effective equivalence relations on an object $X$. 
\end{definition}
\begin{definition} \label{def-factor-relation}
	A \emph{factor relation} on an object $X$ is an equivalence relation $F$ represented by the kernel equivalence relation of a product projection $X \xrightarrow{p} A$ of $X$. A factor relation represented by the kernel equivalence relation of a complement of $p$ is correspondingly called a  \emph{complement} of $F$, and will often be denoted by $F'$. The poset $F(X)$ of factor relations on $X$ is bounded with top element $\nabla_X$ and bottom element $\Delta_X$.
\end{definition}
\noindent
For any effective equivalence relations $F,F'$ on an object $X$ in a regular category $\C$, we have $F \circ F' = \nabla_X$ if and only if the canonical morphism
\[
X \xrightarrow{(q_F,q_{F'})} X/F \times X/F'
\]
is a regular epimorphism (Proposition~\ref{prop-factor-relation-characterization}). The kernel equivalence relation of $(q_F,q_{F'})$ is given by $F \cap F'$, so that $(q_F, q_{F'})$ is mono if and only if $F \cap F' = \Delta_X$. These remarks are summarized in the following proposition, whose proof is omitted.
\begin{proposition}[Proposition 1.44. in \cite{HoePhd}] \label{prop-factor-relation-characterization}
A pair of effective equivalence relations $F,F'$ on an object $X$ in a regular category $\C$ are complementary factor relations if and only if $F \cap F' = \Delta_X$ and $F \circ F' = \nabla_X$. 
\end{proposition}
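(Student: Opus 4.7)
The plan is to deduce this proposition directly from the two remarks made immediately before its statement, namely (a) that for effective equivalence relations $F, F'$ on $X$, the canonical morphism $(q_F, q_{F'}) : X \to X/F \times X/{F'}$ is a regular epimorphism if and only if $F \circ F' = \nabla_X$, and (b) that its kernel pair equals $F \cap F'$, so that it is a monomorphism if and only if $F \cap F' = \Delta_X$. Combining these, the two conditions $F \cap F' = \Delta_X$ and $F \circ F' = \nabla_X$ are jointly equivalent to $(q_F, q_{F'})$ being both a monomorphism and a regular epimorphism, hence an isomorphism.

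For the forward direction, assume $F, F'$ are complementary factor relations. Then $F$ is the kernel pair of a product projection $p : X \to A$ and $F'$ is the kernel pair of a complement $p' : X \to A'$, with $A \xleftarrow{p} X \xrightarrow{p'} A'$ a product diagram. Since effective equivalence relations in a regular category are precisely the kernel pairs of regular epimorphisms, I can identify (up to isomorphism of the codomain) $q_F$ with $p$ and $q_{F'}$ with $p'$. The product diagram then says precisely that $(q_F, q_{F'})$ is an isomorphism, so the preceding remarks yield $F \cap F' = \Delta_X$ and $F \circ F' = \nabla_X$.

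For the converse, assume $F \cap F' = \Delta_X$ and $F \circ F' = \nabla_X$, and form $(q_F, q_{F'}) : X \to X/F \times X/{F'}$. By the remarks, this morphism is a monomorphism and a regular epimorphism, hence an isomorphism (apply the (regular epi, mono)-factorization system and note that a monomorphism which is a regular epimorphism must have a trivial mono part). Then $X/F \xleftarrow{q_F} X \xrightarrow{q_{F'}} X/{F'}$ is a product diagram, so $q_F$ and $q_{F'}$ are complementary product projections with kernel pairs $F$ and $F'$ respectively, proving that $F$ and $F'$ are complementary factor relations.

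The only potentially delicate step is the bookkeeping identification of $q_F$ with the given product projection $p$ in the forward direction, which comes down to the standard regular-category fact that the coequalizer of an effective equivalence relation recovers (the domain of) any regular epimorphism having that relation as its kernel pair. Everything else is a direct application of the cited remarks, so I expect no serious obstacle beyond this routine check.
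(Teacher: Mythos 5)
Your argument is correct and is essentially the paper's own: the paper explicitly omits the proof, saying the proposition merely ``summarizes'' the two remarks preceding it, and your write-up is exactly that summary made precise, via the observation that the two conditions amount to $(q_F,q_{F'})$ being simultaneously a monomorphism and a regular epimorphism, hence an isomorphism. The only step deserving care is your identification of $q_F$ with the given projection $p$ in the forward direction, which requires $p$ to be a regular epimorphism --- not automatic for a product projection (e.g.\ when a complementary factor lacks global support) --- but the argument repairs itself at once: writing $p=mq_F$ and $p'=m'q_{F'}$ for the (regular epi, mono)-factorizations, the isomorphism $(p,p')=(m\times m')(q_F,q_{F'})$ together with $m\times m'$ being a monomorphism forces $(q_F,q_{F'})$ to be an isomorphism anyway.
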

\begin{remark} \label{rem-F(X)-Boolean}
Suppose that $X$ is an object with global support in a regular category $\C$. Every element of $F(X)$ maps to an element of $\Proj{X}$ (by taking coequalizers), and likewise each element of $\Proj{X}$ maps to an element of $F(X)$ (by taking kernel pairs). These maps are inverse poset isomorphisms.
\end{remark}
\begin{proposition} \label{prop-half-projection-coextensive}
Let $\C$ be a regular category, and suppose that in the diagram
\[
\xymatrix{
	A_1 \ar[d]_{p_1} &  X\ar[r]^{\pi_2} \ar[d]|p  \ar[l]_{\pi_1} & A_2  \ar[d]^{p_2} \\
	B_1  & \ar[r]_{\pi'_2} Y  \ar[l]^{\pi'_1}  & B_2
}
\]
 the vertical morphisms are regular epimorphisms, the top and bottom rows are product diagrams, and that $X$ has global support, then the squares are pushouts.
\end{proposition}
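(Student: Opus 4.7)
The plan is to prove that the right-hand square is a pushout; the left-hand square then follows by the same argument with the roles of the two factors interchanged. First I would exploit the two product decompositions: since the top row expresses $X \cong A_1 \times A_2$ and the bottom row expresses $Y \cong B_1 \times B_2$, the commutativities $\pi_1' p = p_1 \pi_1$ and $\pi_2' p = p_2 \pi_2$ force $p$ to coincide with the product morphism $p_1 \times p_2 : A_1 \times A_2 \to B_1 \times B_2$. Writing $R_i$ for the kernel pair of $p_i$ and $R$ for the kernel pair of $p$, the usual coordinate reshuffling then gives an identification $R \cong R_1 \times R_2$ of subobjects of $X \times X$.

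Next I would reduce the pushout statement to a coequalizer statement. Since $p$ is a regular epimorphism, it is the coequalizer of its kernel pair $(r_1, r_2):R \to X$, so the pushout of $\pi_2$ along $p$ is represented by the coequalizer of $\pi_2 r_1, \pi_2 r_2 : R \to A_2$. Under the identification $R \cong R_1 \times R_2$, the induced pair $(\pi_2 r_1, \pi_2 r_2) : R \to A_2 \times A_2$ factors as
\[
R_1 \times R_2 \twoheadrightarrow R_2 \hookrightarrow A_2 \times A_2,
\]
where the first arrow is the second product projection and the second is the monomorphism representing the kernel pair of $p_2$. If I can show this is a (regular epi, mono) factorization, then the coequalizer of $(\pi_2 r_1, \pi_2 r_2)$ agrees with the coequalizer of the two legs of the kernel pair of $p_2$, which by regularity is just $p_2$ itself. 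Combined with $\pi_2' p = p_2 \pi_2$, the universal property would then yield the required isomorphism to $B_2$.

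The main technical step is therefore to verify that the projection $R_1 \times R_2 \to R_2$ is a regular epimorphism, equivalently that $R_1$ has global support. I would argue this in two moves. First, because $X$ has global support, the terminal morphism $X \to 1$ factors as $X \xrightarrow{\pi_1} A_1 \to 1$, and the third bullet at the beginning of the section then forces $A_1 \to 1$ to be a regular epimorphism, so $A_1$ has global support. Second, the equivalence relation $R_1$ on $A_1$ is reflexive, so there is a morphism $\delta : A_1 \to R_1$ with $(r_1^1, r_2^1) \delta = \Delta_{A_1}$; the terminal morphism $A_1 \to 1$ then factors as $A_1 \xrightarrow{\delta} R_1 \to 1$, and by the same bullet $R_1 \to 1$ is a regular epimorphism.

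The main obstacle, to my mind, is spotting precisely this two-step exploitation of global support through the reflexivity of an equivalence relation; once that is in hand, the remainder is a formal manipulation of (regular epi, mono) factorizations and coequalizers in the regular setting, and the left-hand square follows immediately by symmetry.
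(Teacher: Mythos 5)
Your argument is correct and follows essentially the same route as the paper's proof: both identify the kernel pair of $p$ with the product of the kernel pairs of $p_1$ and $p_2$, use global support (via reflexivity of the kernel pairs) to see that the induced comparison maps onto $K_1$ and $K_2$ are regular epimorphisms, and conclude that the squares are pushouts. The paper leaves the final step (epimorphic comparison of kernel pairs implies pushout) implicit, whereas you spell it out via coequalizers; this is a difference of detail, not of method.
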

\begin{proof} 
Consider the diagram
\[
\xymatrix{
	K_1 \ar@<-.5ex>[d]\ar@<.5ex>[d] & K \ar[r]^v\ar[l]_u \ar@<-.5ex>[d] \ar@<.5ex>[d] & K_2 \ar@<-.5ex>[d] \ar@<.5ex>[d]\\
	A_1 \ar[d]_{p_1} &  X\ar[r]^{\pi_2} \ar[d]|p  \ar[l]_{\pi_1} & A_2  \ar[d]^{p_2} \\
	B_1  & \ar[r]_{\pi'_2} Y  \ar[l]^{\pi'_1}  & B_2
}
\]
where $K_1, K$ and $K_2$ are the kernel-pairs of $p_1, p$ and $p_2$ respectively, and $u,v$ are the induced morphisms. The bottom row being a product diagram implies that the top row is a product diagram, and $X$ having global support implies that $K$ has global support, so that by Remark~\ref{remark-product-projections-regular} both $u$ and $v$ are (regular) epimorphisms. Then $u$ and $v$ being epimorphisms implies that the bottom squares are pushouts.
\end{proof}
\begin{theorem} \label{thm-projection-coextensive-equivalent-to-strict-refinement}
Suppose that $X$ is an object with global support in a regular category $\C$, and that $X$ admits pushouts of any two of its product projections. Then $X$ has the finite strict refinement property if and only if it is projection-coextensive. 
\end{theorem}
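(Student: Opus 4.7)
The plan is as follows. The ``only if'' direction, that projection-coextensive implies finite strict refinement, is already delivered by Theorem~\ref{thm-projection-coextensive-implies-strict-refinement} applied in the finite case, so all the work sits in the converse. I would assume throughout that $X$ has the finite strict refinement property, and verify the two clauses of Definition~\ref{def-M-coextensive} with $\M$ the class of product projections. By Remark~\ref{remark-product-projections-regular}, every product projection of $X$ or of its factors is a regular epimorphism, so Proposition~\ref{prop-half-projection-coextensive} is available throughout.

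The unifying idea is that strict refinement, when combined with Proposition~\ref{prop-half-projection-coextensive}, supplies canonical pushouts. Given product projections $p : X \to A$ and $q : X \to B$ with complements $p', q'$, strict refinement produces a four-fold decomposition $X \cong C_{11}\times C_{12}\times C_{21}\times C_{22}$ with $A \cong C_{11} \times C_{12}$, $B \cong C_{11} \times C_{21}$; Proposition~\ref{prop-half-projection-coextensive} then shows that the pair of projections $A \to C_{11}$, $B \to C_{11}$ is a pushout of $p$ along $q$. Uniqueness of pushouts identifies this with the pushout given by the hypothesis of the theorem, so its outward legs are product projections, verifying the first clause of $\M$-coextensivity.

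For the biconditional clause, I would consider the diagram of Definition~\ref{def-M-coextensive} with top row $A_1 \xleftarrow{\pi_1} X \xrightarrow{\pi_2} A_2$ a product diagram and verticals $q_1, r, q_2$ product projections. If the bottom row is a product diagram, then the remaining bottom legs $\pi'_1, \pi'_2$ are product projections and Proposition~\ref{prop-half-projection-coextensive} immediately gives the pushout property, so the two squares are $\M$-pushouts. Conversely, suppose both squares are $\M$-pushouts, so in particular $\pi'_1, \pi'_2$ are product projections. Picking a complement $r' : X \to Y'$ of $r$, I would apply the strict refinement property to the product diagrams $A_1 \leftarrow X \rightarrow A_2$ and $Y \leftarrow X \rightarrow Y'$ to obtain a refinement with $A_i \cong C_{i1} \times C_{i2}$ and $Y \cong C_{11} \times C_{21}$. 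Next, apply Proposition~\ref{prop-half-projection-coextensive} to the diagram obtained from the original by replacing the outer verticals with the projections $A_i \to C_{i1}$ and the bottom row with $C_{11} \leftarrow Y \rightarrow C_{21}$; both new squares are pushouts. Comparing these with the given pushouts via the universal property yields isomorphisms $\phi_i : B_i \to C_{i1}$ under which each $\pi'_i$ becomes the canonical projection $Y \to C_{i1}$, whence $(\pi'_1, \pi'_2) : Y \to B_1 \times B_2$ is an isomorphism, and the bottom row is a product diagram.

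The only real obstacle I expect is the bookkeeping around the strict-refinement indices and the universal-property comparisons; the conceptual content is entirely that strict refinement furnishes, for every relevant cospan of product projections of $X$, a second pushout with transparent structure, namely a projection onto a common factor, which by universality must match the one supplied by hypothesis.
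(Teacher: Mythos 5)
Your proposal is correct and follows essentially the same route as the paper: the forward direction via Theorem~\ref{thm-projection-coextensive-implies-strict-refinement}, and the converse by using the finite strict refinement property to manufacture a diagram with product-diagram bottom row, invoking Proposition~\ref{prop-half-projection-coextensive} (all morphisms being regular epimorphisms by global support) to see its squares are pushouts, and then comparing with the given squares via uniqueness of pushouts. You merely make explicit the four-fold decomposition and the universal-property bookkeeping that the paper leaves implicit.
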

\begin{proof}
If $X$ is projection-coextensive, then by Theorem~\ref{thm-projection-coextensive-implies-strict-refinement} it follows that $X$ has the strict refinement property. Suppose that $A_1 \xleftarrow{\pi_1} X \xrightarrow{\pi_2} A_2$ is a product diagram, and that $X \xrightarrow{p} B$ is any product projection. If $X$ has the finite strict refinement property, then there exists product projections $p_1, p_2,b_1,b_2$ making the diagram
\[
\xymatrix{
	A_1 \ar[d]_{p_1} &  X\ar[r]^{\pi_2} \ar[d]|p \ar[l]_{\pi_1} & A_2  \ar[d]^{p_2} \\
	B_1  & \ar[r]_{b_2} Y  \ar[l]^{b_1} & B_2
}
\]
commute, and the bottom row a product diagram. Note that since $X$ has global support and $\C$ is regular, every morphism in the diagram above is a regular epimorphism. By Proposition~\ref{prop-half-projection-coextensive} the two squares are pushouts. Therefore, if we are given the diagram above, where the top row is a product diagram and the vertical morphisms are product projections, if the squares are pushouts then the bottom row is a product diagram. On the other hand, if the bottom row is a product diagram,  then by Proposition~\ref{prop-half-projection-coextensive} it follows that the two squares are pushouts. 
\end{proof}
\begin{corollary} \label{cor-weakest-coextensivity}
Given an object $X$ with global support in a regular category $\C$, then $X$ is projection-coextensive if and only if the pushout of a product diagram along a product projection exists, and is a product diagram.
\end{corollary}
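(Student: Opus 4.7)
The plan is to reduce the corollary to Theorem~\ref{thm-projection-coextensive-equivalent-to-strict-refinement}, which identifies projection-coextensivity with the finite strict refinement property for objects with global support in a regular category, provided enough pushouts exist. The forward implication is almost a restatement of the definition: given a product diagram $A_1 \xleftarrow{\pi_1} X \xrightarrow{\pi_2} A_2$ and a product projection $p:X\to Y$, the existence of the required pushouts is part of the $\M$-coextensivity requirement with $\M$ the class of product projections, and forming both pushouts of $\pi_1,\pi_2$ along $p$ produces a diagram whose top row is a product diagram, whose vertical morphisms are product projections, and whose two squares are pushouts; the biconditional in Definition~\ref{def-M-coextensive} then forces the bottom row to be a product diagram.

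For the converse, assume that the pushout of every product diagram on $X$ along every product projection of $X$ exists and is itself a product diagram. First I would observe that this already yields pushouts of any two product projections of $X$ along each other: given projections $p:X\to A$ and $q:X\to B$, extend $p$ to a product diagram using any complement and apply the hypothesis to push this diagram out along $q$; the pushout of $p$ along $q$ then appears as one of the two resulting squares. Next I would verify the finite (binary) strict refinement property, which suffices for Theorem~\ref{thm-projection-coextensive-equivalent-to-strict-refinement}. Given two product diagrams $X\xrightarrow{f_i} A_i$ and $X\xrightarrow{g_j} B_j$ for $i,j\in\{1,2\}$, define $C_{i,j}$ as the pushout of $f_i$ along $g_j$, with structure morphisms $\alpha_{i,j}:A_i\to C_{i,j}$ and $\beta_{i,j}:B_j\to C_{i,j}$ satisfying $\alpha_{i,j}f_i=\beta_{i,j}g_j$. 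Applying the hypothesis to $A_1\xleftarrow{f_1} X\xrightarrow{f_2}A_2$ pushed out along $g_j$ shows that $C_{1,j}\xleftarrow{\beta_{1,j}} B_j\xrightarrow{\beta_{2,j}} C_{2,j}$ is a product diagram for each $j$, and symmetrically pushing the $B$-diagram out along $f_i$ shows $C_{i,1}\xleftarrow{\alpha_{i,1}} A_i\xrightarrow{\alpha_{i,2}} C_{i,2}$ is a product diagram for each $i$. This is precisely the data required by Definition~\ref{def-strict-refinement}, so $X$ has the finite strict refinement property, and Theorem~\ref{thm-projection-coextensive-equivalent-to-strict-refinement} then yields projection-coextensivity.

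The only point worth flagging is that the pushouts appearing in the two symmetric constructions of the $C_{i,j}$ are defined by the same universal property in $\C$ (pushout of $f_i$ along $g_j$), so the comparisons $\alpha_{i,j}$ and $\beta_{i,j}$ coming from the two constructions are canonically identified, and no compatibility has to be checked by hand. The substantive work is already done in Theorem~\ref{thm-projection-coextensive-equivalent-to-strict-refinement} and the underlying Proposition~\ref{prop-half-projection-coextensive}; the corollary merely repackages strict refinement as the single pushout condition stated.
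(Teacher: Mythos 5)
Your proof is correct. The paper leaves this corollary without an explicit proof, the intended argument being the direct one: since $X$ has global support, every morphism occurring in the relevant diagrams is a regular epimorphism, so Proposition~\ref{prop-half-projection-coextensive} makes the implication ``bottom row a product diagram $\Rightarrow$ both squares are pushouts'' automatic, and the stated condition on pushouts of product diagrams is then precisely the remaining content of Definition~\ref{def-M-coextensive} for $\M$ the class of product projections. Your converse takes a mild detour: you first extract pushouts of pairs of product projections (correctly, and this is exactly what is needed to meet the standing hypothesis of Theorem~\ref{thm-projection-coextensive-equivalent-to-strict-refinement}), then verify the strict refinement property and invoke the theorem. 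This is sound and ultimately rests on the same two ingredients, the theorem and, underneath it, Proposition~\ref{prop-half-projection-coextensive}. One cosmetic point: you only check the binary case of the refinement property, whereas the theorem is stated for the finite version; this is harmless, both because your construction generalizes verbatim to arbitrary finite index sets (push out the entire $I$-indexed product diagram along each $g_j$) and because the paper's proof of the theorem only ever uses the binary case. Your closing remark that the two symmetric constructions of $C_{i,j}$ agree by the universal property of the pushout is the right thing to say and disposes of the only compatibility issue.
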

\begin{corollary} \label{{cor-finite-SRP-implies-infinte-SRP}}
Let $\C$ be a complete regular category and let $X$ be an object with global support in $\C$. If $X$ has the finite strict refinement property, then $\C$ has the strict refinement property.
\end{corollary}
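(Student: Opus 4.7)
The plan is to reduce to Theorem~\ref{thm-projection-coextensive-implies-strict-refinement}: since $\C$ is complete it admits arbitrary products, so once $X$ is shown to be projection-coextensive that theorem will automatically produce the (infinite) strict refinement property. By Corollary~\ref{cor-weakest-coextensivity}, projection-coextensivity of $X$ reduces to checking that, for every product diagram $A_1 \xleftarrow{\pi_1} X \xrightarrow{\pi_2} A_2$ and every product projection $p : X \rightarrow B$, the pushout of the span along $p$ exists and is itself a product diagram. I verify this condition by using the finite SRP as the mechanism that supplies the refinement.

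Fix such a diagram, choose a complement $\bar{p} : X \rightarrow \bar{B}$ of $p$, and apply the finite strict refinement property to the binary product diagrams $(\pi_1,\pi_2)$ and $(p,\bar{p})$. This yields objects $C_{i,j}$ ($i,j \in \{1,2\}$) and morphisms $\alpha_{i,j} : A_i \rightarrow C_{i,j}$, $\beta_{i,j} : B_j \rightarrow C_{i,j}$ (with $B_1 = B$, $B_2 = \bar{B}$), such that each row $(\alpha_{i,1},\alpha_{i,2})$ and each column $(\beta_{1,j},\beta_{2,j})$ is a product diagram. The column at $j=1$ furnishes the commutative rectangle
\[
\xymatrix{
A_1 \ar[d]_{\alpha_{1,1}} & X \ar[l]_{\pi_1} \ar[r]^{\pi_2} \ar[d]|{p} & A_2 \ar[d]^{\alpha_{2,1}}\\
C_{1,1} & B \ar[l]^{\beta_{1,1}} \ar[r]_{\beta_{2,1}} & C_{2,1}
}
\]
whose top row is a product diagram by hypothesis and whose bottom row is a product diagram because $B \cong C_{1,1} \times C_{2,1}$ through $\beta_{1,1}$ and $\beta_{2,1}$.

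Every vertical morphism above is a product projection, and since $X$ has global support so do each of $A_1, X, A_2$; Remark~\ref{remark-product-projections-regular} then makes each vertical arrow a regular epimorphism. Proposition~\ref{prop-half-projection-coextensive} applies and forces both squares to be pushouts, which is exactly the condition of Corollary~\ref{cor-weakest-coextensivity}: the pushout of $A_1 \leftarrow X \rightarrow A_2$ along $p$ exists and equals the product diagram $C_{1,1} \leftarrow B \rightarrow C_{2,1}$. Therefore $X$ is projection-coextensive, and Theorem~\ref{thm-projection-coextensive-implies-strict-refinement} applied in $\C$ (which admits all products) gives $X$ the strict refinement property. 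The only conceptual step is recognising that the combinatorial SRP data automatically meets the regular-category hypotheses of Proposition~\ref{prop-half-projection-coextensive}, which is what upgrades the SRP refinement into a pushout square; no obstacle beyond careful bookkeeping should arise.
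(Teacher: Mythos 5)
Your proposal is correct and follows essentially the same route as the paper: finite SRP $\Rightarrow$ projection-coextensive $\Rightarrow$ (infinite) SRP via Theorem~\ref{thm-projection-coextensive-implies-strict-refinement}. The only difference is that you unfold the first implication explicitly (via Corollary~\ref{cor-weakest-coextensivity} and Proposition~\ref{prop-half-projection-coextensive}), whereas the paper simply cites it, having already established it as Theorem~\ref{thm-projection-coextensive-equivalent-to-strict-refinement}.
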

\begin{proof}
This follows from the fact that if $X$ has the finite strict refinement property, then it is projection-coextensive. Then $X$ being projection-coextensive, it has the strict refinement property by Theorem~\ref{thm-projection-coextensive-implies-strict-refinement}. 
\end{proof}
\subsection{Pre-exact categories and strict refinement}
Given a category $\C$ with kernel pairs and coequalizers of equivalence relations, any equivalence relation $E$ on any object $A$ in $\C$ admits an \emph{effective closure} $\overline{E}$, namely, the kernel equivalence relation of any coequalizer of $E$. 
\begin{definition} \label{def-pre-exact}
	A regular category $\C$ is said to be \emph{pre-exact} if it admits coequalizers of equivalence relations, and for any two equivalence relations $E_1$ and $E_2$, we have $\overline{E_1 \cap E_2} = \overline{E_1} \cap \overline{E_2}$.
\end{definition}
\begin{remark}
Any exact category $\C$ in the sense of \cite{BGO71} is pre-exact, since every equivalence relation $E$ is effective, and hence $E = \overline{E}$, so that  $\C$ satisfies the conditions of Definition~\ref{def-pre-exact}. 
\end{remark}
\begin{proposition} \label{prop-pre-exact}
	Let $\C$ and $\D$ be regular categories which have coequalizers of equivalence relations, and let $F: \C \rightarrow \D$ be a functor which preserves finite limits, coequalizers of equivalence relations, and reflects epimorphisms. If $\D$ is pre-exact, then so is $\C$. 
\end{proposition}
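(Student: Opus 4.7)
The plan is to verify the pre-exactness identity in $\C$ by transferring it from $\D$ along $F$. Fix equivalence relations $E_1, E_2$ on an object $A \in \C$; the task is to establish $\overline{E_1 \cap E_2} = \overline{E_1} \cap \overline{E_2}$ in $\C$. The inclusion $\overline{E_1 \cap E_2} \subseteq \overline{E_1} \cap \overline{E_2}$ is automatic from monotonicity of effective closure: $E_1 \cap E_2 \subseteq E_i$ yields $\overline{E_1 \cap E_2} \subseteq \overline{E_i}$ for each $i$. Let $i \colon \overline{E_1 \cap E_2} \to \overline{E_1} \cap \overline{E_2}$ denote the induced canonical monomorphism (both sides being subobjects of $A \times A$); the goal reduces to showing $i$ is an isomorphism.

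Next I would verify that $F$ commutes with the formation of effective closures. Preservation of finite limits ensures $F$ sends equivalence relations to equivalence relations, preserves intersections (computed as pullbacks), and preserves kernel pairs; preservation of coequalizers of equivalence relations ensures that any coequalizer of $E$ in $\C$ is sent to a coequalizer of $F(E)$ in $\D$. Combining these, $F(\overline{E}) = \overline{F(E)}$ canonically for every equivalence relation $E$. Consequently
\[
F(\overline{E_1 \cap E_2}) = \overline{F(E_1) \cap F(E_2)} \quad \text{and} \quad F(\overline{E_1} \cap \overline{E_2}) = \overline{F(E_1)} \cap \overline{F(E_2)},
\]
and pre-exactness of $\D$ applied to the pair $F(E_1), F(E_2)$ identifies these as the same subobject of $F(A \times A)$. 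Hence $F(i)$ is an isomorphism in $\D$.

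The main obstacle is the concluding reflection step: deducing that $i$ is an isomorphism in $\C$ from the fact that $F(i)$ is. Since $F(i)$ is in particular an epimorphism and $F$ reflects epimorphisms, $i$ is an epimorphism in $\C$; combined with $i$ being a monomorphism, taking the (regular epi, mono)-factorization $i = me$ in the regular category $\C$ forces $e$ to be a monomorphism (as $me = i$ is) and hence an isomorphism by regularity, identifying $i$ up to iso with its mono part $m$. Now $F$ preserves this factorization—monomorphisms via preservation of finite limits, and regular epimorphisms as coequalizers of kernel pairs (which are effective equivalence relations, hence lie in the class whose coequalizers $F$ preserves)—so uniqueness of the (regular epi, mono)-factorization of the isomorphism $F(i)$ in the regular $\D$ forces $F(m)$ to be an isomorphism. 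A final application of the reflection properties of $F$ (exploiting reflection of epimorphisms together with the fact that preservation of finite limits combined with reflection of epimorphisms reflects monomorphisms, via the diagonal of a kernel pair as a split mono) then yields that $m$, and therefore $i$, is an isomorphism in $\C$, completing the proof.
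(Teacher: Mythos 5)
Your argument is correct, and matches the paper's (sketched) proof, up to the point where you know that the canonical monomorphism $i \colon \overline{E_1 \cap E_2} \rightarrow \overline{E_1} \cap \overline{E_2}$ becomes an isomorphism under $F$ and is therefore an epimorphism in $\C$ (since $F$ reflects epimorphisms). The gap is the very last step: you conclude that $i$ is an isomorphism from the facts that it is both a monomorphism and an epimorphism and that $F$ reflects epimorphisms and (as you argue via the diagonal of the kernel pair) monomorphisms. This is a non sequitur: a bimorphism need not be an isomorphism, even in a regular category, and a finite-limit-preserving functor that reflects epimorphisms need not reflect isomorphisms. Reflecting monos and epis separately only ever returns you to ``$i$ is a mono and an epi'', which you already had. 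The detour through the (regular epi, mono)-factorization adds nothing, because $i$ is already a monomorphism, so that factorization is trivially $i = i \circ 1$, and the uniqueness argument in $\D$ merely restates that $F(i)$ is an isomorphism.

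What actually closes the argument is not a general reflection property of $F$ but the effectivity of $\overline{E_1 \cap E_2}$; this is the content of the paper's remark that an epimorphism between kernel pairs is necessarily an isomorphism. Concretely, let $q$ be a coequalizer of $\overline{E_1 \cap E_2}$ and let $s_1, s_2$ be the projections of $\overline{E_1} \cap \overline{E_2}$. From $q s_1 i = q s_2 i$ and $i$ epi one gets $q s_1 = q s_2$, so $\overline{E_1} \cap \overline{E_2}$ factors (monomorphically) through the kernel pair of $q$, which is exactly $\overline{E_1 \cap E_2}$ because an effective closure is effective. This yields the reverse inclusion $\overline{E_1} \cap \overline{E_2} \leqslant \overline{E_1 \cap E_2}$, hence equality. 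With this replacement for your final paragraph the proof is complete and coincides with the paper's.
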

The proof of the above proposition is a standard preservation and reflection argument, the details of which can be found in the proof of Proposition 4.1 in \cite{HoePhd}. We include a sketch of that argument here:
\begin{proof}[Proof Sketch]
The functor $F$ preserves finite limits and effective closures of equivalence relations, and reflects equality of effective equivalence relation. The latter is due to the fact that any morphism in $\C$ between kernel pairs, which is an epimorphism, is necessarily and isomorphism. 
\end{proof}
\begin{example} \label{example-pre-exact-categories}
	The dual categories  $\Top^{\op}, \Ord^{\op}, \Grph^{\op}, \Rel^{\op}$ of topological spaces, ordered sets, graphs and binary relations, all admit forgetful functors to $\Set^{\op}$ these forgetful functors satisfy the conditions of Proposition~\ref{prop-pre-exact}, and since $\Set^{\op}$ is pre-exact (since it is exact), it follows that each of the categories above are pre-exact (since they are all regular categories which have coequalizers of equivalence relations). The category of topological groups $\Grp(\Top)$ is regular, but not exact. It has all small limits and colimits, and the forgetful functor $\Grp(\Top) \rightarrow \Grp$ has both a left and a right adjoint, and therefore it preserves all limits and colimits which exist in $\Grp(\Top)$. This functor reflects epimorphisms, and therefore since $\Grp$ is exact, $\Grp(\Top)$ is pre-exact.
\end{example}
\noindent
The next result is an analogue of Theorem~4.5 in \cite{CJT64}, for regular categories (see also Theorem~5.17 in \cite{MMT87}). Note that, given two factor relations $F$ and $G$ on an object $X$ in a pre-exact category $\C$ such that $F \circ G = G \circ F$, the composite $F \circ G$ is an equivalence relation which is the join of $F$ and $G$ in $\Eq{X}$. Moreover, the join of $F$ and $G$ in the lattice $\Ef{X}$ of effective equivalence relations exists, and is given by the effective closure $\overline{F\circ G}$ of $F \circ G$. In what follows we will write $F \overline{\circ} G$ for $\overline{F\circ G}$. 
\begin{proposition} \label{prop-characterization-strict-refinement}
	The following are equivalent for an object $X$ with global support in a pre-exact category $\C$. 
\begin{enumerate}[(i)]
	\item $X$ is projection-coextensive.
	\item $F(X)$ is a sublattice of $\Ef{X}$ which is Boolean. 
	\item $F(X)$ forms a Boolean lattice under the operations $\overline{\circ}$ and $\cap$.
\end{enumerate}
\end{proposition}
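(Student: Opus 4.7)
The approach is to leverage the poset isomorphism $F(X) \cong \Proj{X}$ of Remark~\ref{rem-F(X)-Boolean}, transporting the Boolean lattice structure obtained from projection-coextensivity (via Corollary~\ref{cor-projection-coextensive-implies-boolean-algebra}) to $F(X)$, and then identifying the operations with $\overline{\circ}$ and $\cap$ using the structure of $\Ef{X}$ in the pre-exact setting. The plan is to prove the cyclic chain (i) $\Rightarrow$ (iii) $\Rightarrow$ (ii) $\Rightarrow$ (i).

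A preliminary observation is that in any pre-exact category, $\Ef{X}$ is closed under binary intersection: if $E_1, E_2 \in \Ef{X}$ then $E_1 \cap E_2 = \overline{E_1} \cap \overline{E_2} = \overline{E_1 \cap E_2}$ is effective by the pre-exactness axiom. Hence $\cap$ realises the meet in $\Ef{X}$. Joins in $\Ef{X}$ are obtained as effective closures of joins in $\Eq{X}$; for a pair of permuting effective equivalence relations $E_1, E_2$ the join is therefore $\overline{E_1 \circ E_2} = E_1 \overline{\circ} E_2$. A further preliminary step is to verify that any two factor relations on $X$ permute, using the product isomorphisms induced by each factor relation and its complement together with the fact that $X$ has global support.

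For (i) $\Rightarrow$ (iii): by Corollary~\ref{cor-projection-coextensive-implies-boolean-algebra}, $\Proj{X}$ is Boolean, which transfers to $F(X)$ via Remark~\ref{rem-F(X)-Boolean}. To identify the join, observe that by Remark~\ref{rem-joins-of-projections} the join $[p_F] \vee [p_G]$ in $\Proj{X}$ is represented by the diagonal of the pushout $P$ of $p_F$ along $p_G$, so the corresponding join in $F(X)$ is the kernel pair of $X \to P$; in the pre-exact setting this kernel pair coincides with $\overline{F \circ G} = F \overline{\circ} G$, since $X \to P$ serves as a coequalizer of the equivalence relation generated by $F$ and $G$, which by permutability is already $F \circ G$. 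To identify the meet, note that $(p_F, p_G)\colon X \to X/F \times X/G$ has kernel pair $F \cap G$; projection-coextensivity combined with the Boolean structure forces the regular-epi part of this map to be a product projection representing $[p_F] \wedge [p_G]$, so $F \cap G \in F(X)$ and realises the meet.

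Implication (iii) $\Rightarrow$ (ii) then follows directly from the preliminary step, as $\cap$ and $\overline{\circ}$ coincide with the meet and join operations of $\Ef{X}$ restricted to $F(X)$. For (ii) $\Rightarrow$ (i), the Boolean structure on $F(X) \cong \Proj{X}$ is granted; to apply Theorem~\ref{thm-projection-coextensive-characterization} we must verify that $X$ has co-disjoint products (product projections are regular epi by Remark~\ref{remark-product-projections-regular}, and pushouts of complementary projections are terminal because by Proposition~\ref{prop-factor-relation-characterization} complementary factor relations satisfy $F \circ F' = \nabla_X$) and that the pushout of two product projections is a product projection (obtained as the coequalizer of the join $F \overline{\circ} G \in F(X)$, which lies in $F(X)$ by (ii)). The main technical obstacle will be the identification of the join in $F(X)$ with $F \overline{\circ} G$ in the step (i) $\Rightarrow$ (iii): this rests critically on the permutability of factor relations and on showing that the kernel pair of the pushout diagonal $X \to P$ coincides with $\overline{F \circ G}$ in the pre-exact setting.
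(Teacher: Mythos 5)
There is a genuine gap, and it sits exactly where you locate the ``main technical obstacle''. Your ``further preliminary step'' --- that any two factor relations on $X$ permute, verifiable from the two product decompositions and global support alone --- is false in general. In $\Set$ (exact, hence pre-exact) take the six-element set $A=\{0,\dots,5\}$ with the factor relation $\beta$ whose classes are $\{0,1,2\},\{3,4,5\}$ (complement: $\{0,3\},\{1,4\},\{2,5\}$) and the factor relation $\delta$ whose classes are $\{0,1\},\{2,3\},\{4,5\}$ (complement: $\{0,2,4\},\{1,3,5\}$); both are kernel pairs of genuine product projections, yet $(0,3)\in\beta\circ\delta$ while $(0,3)\notin\delta\circ\beta$. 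So permutability of factor relations is precisely one of the things that must be \emph{deduced} from hypothesis (i) (or (ii)); it cannot be assumed up front, and indeed the expression $F\ \overline{\circ}\ G$ in statement (iii) only makes sense once $F\circ G$ is known to be an equivalence relation. The paper extracts permutability from the Boolean sublattice property of (ii) via the relation computation $(F\circ G\circ F)\cap F'\subseteq (F\vee G)\cap F'\subseteq G$, hence $F\circ G\circ F\subseteq ((F\circ G\circ F)\cap F')\circ F\subseteq G\circ F$ --- this is the entire content of its step $(ii)\Rightarrow(iii)$, and it is missing from your argument. Since your identification of the join in $(i)\Rightarrow(iii)$ rests on this permutability, the proof collapses at that point.

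A second, smaller gap: in $(i)\Rightarrow(iii)$ you assert that projection-coextensivity ``forces'' the regular-epi part of $(p_F,p_G)$ to be a product projection, i.e.\ that $F\cap G$ lies in $F(X)$ and realises the meet. This is the nontrivial half of the paper's $(i)\Rightarrow(ii)$: it is proved by pushing the two decompositions out against each other to obtain a $3\times 3$ grid whose rows and columns are product diagrams, deducing $(F\vee G)\cap(F'\vee G)=G$ and $(F\vee G)\cap(F\vee G')=F$, and then invoking uniqueness of complements (Proposition~\ref{prop-complements-unique}) to compute $F\wedge G=(F'\vee G')'=(F\vee G)\cap(F'\vee G)\cap(F\vee G')=F\cap G$; some such argument is needed, not just an appeal to the Boolean structure of $\Proj{X}$. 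The rest of your plan is sound and close to the paper's: the observation that pre-exactness makes $\Ef{X}$ closed under $\cap$, the step $(iii)\Rightarrow(ii)$, and the verification of the hypotheses of Theorem~\ref{thm-projection-coextensive-characterization} for the return implication all match what the paper does in its $(iii)\Rightarrow(i)$ step.
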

\begin{proof}
 $(i) \implies (ii)$:  By Corollary~\ref{cor-projection-coextensive-implies-boolean-algebra}, if $X$ is projection-coextensive, then $\Proj{X}$ is a Boolean lattice where joins are given by pushout. Note that $F(X)$ is isomorphic to $\Proj{X}$ (see Remark~\ref{rem-F(X)-Boolean}), and the join of two factor relations $F$ and $G$ is given by their join in $\Ef{X}$. We next show that the meet of $F$ and $G$ in $F(X)$ is given by their meet $F \cap G$ in the lattice $\Ef{X}$ of effective equivalence relations on $X$. Suppose that $F', G'$ are the complementary factor relations of $F$ and $G$ respectively. Since $X$ is projection-coextensive, every edge in the outer square:
\[
\xymatrix{
	\frac{X}{F \vee G} &  \frac{X}{F} \ar[r] \ar[l] &  \frac{X}{F \vee G'} \\
	\frac{X}{G } \ar[u] \ar[d] &  X \ar[u] \ar[d] \ar[l] \ar[r]  &  \frac{X}{G'} \ar[u] \ar[d] \\
	\frac{X}{F' \vee G} &  \frac{X}{F'} \ar[r]\ar[l] &  \frac{X}{F' \vee G'} \\
}
\] 
is a product diagram. This implies that
\[
(F \vee G) \cap (F' \vee G) = G \quad \text{and} \quad (F \vee G) \cap (F \vee G') = F.
\]  
Since the canonical morphism 
\[
X \longrightarrow \frac{X}{F \vee G} \times  \frac{X}{F' \vee G} \times \frac{X}{F \vee G'}
\]
is a complementary product projection of $X \rightarrow \frac{X}{F' \vee G'}$, and by Proposition~\ref{prop-complements-unique} complements are unique, it follows that the meet $F \wedge G$ of two factor relations in $F(X)$ is given by:
\[
F \wedge G = (F' \vee G')' = (F \vee G) \cap (F' \vee G) \cap (F \vee G') = F \cap G.
\]
Therefore $F(X)$ is a sublattice of $\Ef{X}$ which is Boolean. $(ii) \implies (iii):$ Suppose that $F,G \in F(X)$, and that $F'$ is the complement of $F$ and $G'$ the complement of $G$. Then
\[
((F \circ G \circ F) \cap F ') \subseteq (F \vee G) \cap F' \subseteq G,
\]
which implies that 
\[
F \circ G \circ F \subseteq  (F \circ G \circ F) \cap (F ' \circ F) = ((F \circ G \circ F) \cap F ') \circ F \subseteq G \circ F.
\]
This implies that  $F \circ G$ is an equivalence relation, so that $\overline{F \circ G} = F \vee G$ in $F(X)$, and therefore $F(X)$ is a Boolean lattice under $\overline{\circ}$ and $\cap$. For $(iii) \implies (i)$, just note that $F(X)$ being Boolean under $\overline{\circ}$ and $\cap$ implies that $\Proj{X}$ is a Boolean lattice, and that the pushout of any two product projections of $X$ are product projections.  Therefore, by Theorem~\ref{thm-projection-coextensive-characterization} it follows that $X$ is projection-coextensive, since every object with global support in a regular category has co-disjoint-products.
\end{proof}
\begin{remark}
Note that the proof of $(ii) \implies (iii)$ is similar to the proof of Theorem 4.5 $(vi) \implies (v)$ in \cite{CJT64}. 
\end{remark}

In \cite{Bou05}, the author characterized the congruence distributivity property for regular Goursat categories in terms of preservation of binary meets of equivalence relations by regular-epimorphisms. One of the basic observations is that if $f:X \rightarrow Y$ is any regular epimorphism in a regular category $\C$ and $E$ any equivalence relation on $X$, then in the notation of Section~\ref{sec-subobjects-relations-regular-categories} we have
\[
(f \times f)^* (f \times f)_* (E) = K \circ E \circ K,
\]
where $K$ is the kernel equivalence relation of $f$. In what follows we will denote $f^{-1}(E) = (f \times f)^*(E)$ and $f(E) = (f\times f)_*(E)$, so that the above equation reduces to:
\[
f^{-1}(f(E)) = K \circ E \circ K.
\]
\begin{proposition} \label{prop-characterization-two-strict-refinement}
	The following are equivalent for an object $X$ with global support in a pre-exact category $\C$. 
	\begin{enumerate}[(i)]
		\item $X$ is projection-coextensive.
		\item For any $F,G \in F(X)$, $F \circ G = G \circ F$ and we have 
		\[
		q(G) \cap q(G') = \Delta_{X/F},
		\]
		where $q:X \rightarrow X/F$ is a canonical quotient.
	\end{enumerate}
\end{proposition}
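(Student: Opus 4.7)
The plan is to establish $(i) \Rightarrow (ii)$ directly from the four-corner decomposition in the proof of Proposition~\ref{prop-characterization-strict-refinement}, and to prove $(ii) \Rightarrow (i)$ by verifying the criterion of Corollary~\ref{cor-weakest-coextensivity}; the technical work will rest on the identity $q_H^{-1}(q_H(R)) = H\circ R\circ H$ (from the Galois connection) together with pre-exactness.

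For $(i) \Rightarrow (ii)$: Suppose $X$ is projection-coextensive, and take $F, G \in F(X)$ with complements $F', G'$. By Proposition~\ref{prop-characterization-strict-refinement}, $F(X)$ is a Boolean sublattice of $\Ef{X}$, and the argument inside the proof of $(ii)\Rightarrow(iii)$ there yields the commutativity $F\circ G = G\circ F$. The four-corner decomposition established there gives $X/F \cong X/(F\vee G) \times X/(F\vee G')$, so that $q_F(F\vee G)$ and $q_F(F\vee G')$ are complementary factor relations on $X/F$ and hence meet in $\Delta_{X/F}$. Since $q_F(G) \subseteq q_F(F\vee G)$ and $q_F(G') \subseteq q_F(F\vee G')$, we obtain $q_F(G)\cap q_F(G') \subseteq \Delta_{X/F}$, as required.

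For $(ii) \Rightarrow (i)$: I will verify Corollary~\ref{cor-weakest-coextensivity} by showing that for any $F,G\in F(X)$ the pushouts of $X\to X/F$ and $X\to X/F'$ along $X\to X/G$ yield a product diagram for $X/G$. Commutativity makes $F\circ G$ an equivalence relation, so the effective closures $E_1:=\overline{F\circ G}$, $E_2:=\overline{F\circ G'}$, $E_3:=\overline{F'\circ G}$, $E_4:=\overline{F'\circ G'}$ are well defined, and the two pushouts in question are $X/G\to X/E_1$ and $X/G\to X/E_3$ respectively. By Proposition~\ref{prop-factor-relation-characterization}, the conclusion amounts to showing $q_G(E_1)\cap q_G(E_3) = \Delta_{X/G}$ and $q_G(E_1)\circ q_G(E_3) = \nabla_{X/G}$.

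The heart of the proof is the derivation of the identities
\[
(F\circ G)\cap(F\circ G')=F,\quad (F'\circ G)\cap(F'\circ G')=F',
\]
\[
(F\circ G)\cap(F'\circ G)=G,\quad (F\circ G')\cap(F'\circ G')=G',
\]
each obtained by pulling back the hypothesis along $q_F$, $q_{F'}$, $q_G$, $q_{G'}$ respectively (commutativity collapses $H\circ R\circ H$ to $H\circ R$). Pre-exactness then promotes these $\Eq{X}$-level identities to the identities $E_1\cap E_2=F$, $E_3\cap E_4=F'$, $E_1\cap E_3=G$, $E_2\cap E_4=G'$ in $\Ef{X}$. Then $(q_G\times q_G)^*(q_G(E_1)\cap q_G(E_3)) = E_1\cap E_3 = G = (q_G\times q_G)^*(\Delta_{X/G})$, and the injectivity of $(q_G\times q_G)^*$ (holding because $q_G\times q_G$ is a regular epi) yields the intersection identity. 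For the composition, commutativity forces
\[
E_1\circ E_3 \supseteq (F\circ G)\circ(F'\circ G) = F\circ F'\circ G\circ G = \nabla_X,
\]
whence $q_G(E_1)\circ q_G(E_3)=\nabla_{X/G}$. The main obstacle is the careful bookkeeping between the operations $\circ$, $\cap$, and the direct/inverse images; pre-exactness is invoked at exactly the step where the identities among the composites $F\circ G$ are upgraded to identities among their effective closures $E_i$, where the factor-relation structure actually lives.
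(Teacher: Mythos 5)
Your proposal is correct and follows essentially the same route as the paper's proof: both directions rest on Proposition~\ref{prop-characterization-strict-refinement}, the identity $q^{-1}(q(E)) = F \circ E \circ F$ combined with commutativity, pre-exactness to pass from the composites $F \circ G$ to their effective closures, and Corollary~\ref{cor-weakest-coextensivity}. The only differences are cosmetic: the paper obtains $q(G) \cap q(G') = \Delta_{X/F}$ by a direct image/inverse image computation rather than via the four-corner product decomposition, and it verifies the product condition by computing the kernel pair of $X \rightarrow X/\overline{F \circ G} \times X/\overline{F' \circ G}$ on $X$ itself rather than descending to $X/G$ and invoking Proposition~\ref{prop-factor-relation-characterization} there.
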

\begin{proof}
Suppose that $X$ is projection-coextensive, so that by Proposition~\ref{prop-characterization-strict-refinement}, $F(X)$ is a Boolean lattice under $\overline{\circ}$ and $\cap$. Let $q:X \rightarrow X/F$ be a canonical quotient, then:
\begin{align*}
q(G) \cap q(G') = q(q^{-1}(q(G) \cap q(G'))) &= q(q^{-1}(q(G)) \cap q^{-1}(q(G'))) \\
&= q((F \circ G) \cap (F\circ G')) \\
&\subseteq q((F \overline{\circ} G) \cap (F \overline{\circ} G'))\\
& = q(F) = \Delta_{X/F}.
\end{align*}
For the converse, suppose $F,G \in F(X)$ and that $q:X \rightarrow X/F$ is a canonical quotient, then since $q(G) \cap q(G') = \Delta_{X/F}$ it follows that
\[
F  = q^{-1}(q(G) \cap q(G')) = q^{-1}q(G) \cap q^{-1}q(G')  \implies F = (F \circ G) \cap (F \circ G').
\]
Since 
\[
\nabla_X = F \circ  G \circ F' \circ  G \subseteq \overline{F \circ  G} \circ \overline{F' \circ  G},
\] 
the canonical morphism 
\[
X \rightarrow (X/\overline{F \circ  G}) \times (X/\overline{F' \circ  G}),
\] 
is a regular epimorphism. Its kernel equivalence relation is given by
\[
\overline{F' \circ  G} \cap \overline{F \circ  G} = \overline{(F' \circ  G) \cap (F \circ  G)} = G,
\]
since $\C$ is pre-exact. This implies that pushing out the product diagram $X/F \leftarrow X \rightarrow X/F'$ along $X \rightarrow X/G$ is a product diagram, and thus by Corollary~\ref{cor-weakest-coextensivity} it follows that $X$ is projection-coextensive. 
\end{proof}
\noindent
As a summary of  Theorem~\ref{thm-projection-coextensive-implies-strict-refinement}, Theorem~\ref{thm-projection-coextensive-equivalent-to-strict-refinement}, Propositions~\ref{prop-characterization-strict-refinement} and \ref{prop-characterization-two-strict-refinement} we have the following theorem.
\begin{theorem} \label{thm-characterization-two-strict-refinement}
	The following are equivalent for an object $X$ with global support in a small complete pre-exact category $\C$. 
	\begin{enumerate}[(i)]
		\item $X$ has the strict refinement property.
		\item $X$ has the finite strict refinement property.
		\item $X$ is projection-coextensive.
		\item $F(X)$ is a sublattice of $\Ef{X}$ which is Boolean. 
		\item $F(X)$ is a Boolean lattice under the operations $\overline{\circ}$ and $\cap$.
		\item  For any $F,G \in F(X)$, $F \circ G = G \circ F$ and we have 
		\[
		q(G) \cap q(G') = \Delta_{X/F}
		\]
		where $q:X \rightarrow X/F$ is a canonical quotient, and $G'$ is the complement of $G$.
	\end{enumerate}
\end{theorem}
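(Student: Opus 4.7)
The plan is to prove the six-way equivalence as a direct consolidation of the earlier results of this section, arranging the implications into a single cycle that passes through projection-coextensivity (condition (iii)). Concretely, I would establish the cycle $\text{(iii)} \Rightarrow \text{(i)} \Rightarrow \text{(ii)} \Rightarrow \text{(iii)}$, and then append the equivalences $\text{(iii)} \Leftrightarrow \text{(iv)} \Leftrightarrow \text{(v)}$ and $\text{(iii)} \Leftrightarrow \text{(vi)}$ by direct citation.

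For the cycle, $\text{(iii)} \Rightarrow \text{(i)}$ is immediate from Theorem~\ref{thm-projection-coextensive-implies-strict-refinement}, once we note that the small completeness of $\C$ supplies the arbitrary-index product diagrams occurring in Definition~\ref{def-strict-refinement}. The step $\text{(i)} \Rightarrow \text{(ii)}$ is tautological, as the finite strict refinement property is the restriction of the strict refinement property to finite index sets. For $\text{(ii)} \Rightarrow \text{(iii)}$, I would invoke Theorem~\ref{thm-projection-coextensive-equivalent-to-strict-refinement}; the only hypothesis of that theorem not already built into our assumptions is that $X$ admit pushouts along any two of its product projections, and this is precisely where pre-exactness together with small completeness needs to be used.

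The remaining equivalences are already packaged in our earlier propositions. Namely, $\text{(iii)} \Leftrightarrow \text{(iv)} \Leftrightarrow \text{(v)}$ is exactly the content of Proposition~\ref{prop-characterization-strict-refinement}, and $\text{(iii)} \Leftrightarrow \text{(vi)}$ is Proposition~\ref{prop-characterization-two-strict-refinement}. Since global support of $X$ and pre-exactness of $\C$ are the standing hypotheses of both, these equivalences drop out by appeal to the earlier statements alone and require no additional argument.

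The step requiring most care is the existence of the relevant pushouts in $\text{(ii)} \Rightarrow \text{(iii)}$, because small completeness is ostensibly only a condition on limits. For two product projections $p : X \to X/F$ and $q : X \to X/G$, with $F, G \in F(X)$ the associated factor relations, the pushout of $p$ along $q$ can be realized as the canonical quotient $X \to X / \overline{F \vee G}$, where $F \vee G$ denotes the join in $\Eq{X}$ (obtained from small completeness as the intersection of all equivalence relations containing $F \cup G$) and the effective closure is furnished by pre-exactness. Both $p$ and $q$ are regular epimorphisms (Remark~\ref{remark-product-projections-regular}), so the universal property of the pushout follows in the usual way from that of the quotient by an effective equivalence relation. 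Once this is in place, Theorem~\ref{thm-projection-coextensive-equivalent-to-strict-refinement} applies directly and the cycle closes.
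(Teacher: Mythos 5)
Your proof is correct and takes essentially the same route as the paper, which presents this theorem precisely as a summary of Theorem~\ref{thm-projection-coextensive-implies-strict-refinement}, Theorem~\ref{thm-projection-coextensive-equivalent-to-strict-refinement}, Proposition~\ref{prop-characterization-strict-refinement} and Proposition~\ref{prop-characterization-two-strict-refinement}. Your explicit check that pushouts of pairs of product projections exist (as the quotient by the effective closure of the join of the corresponding factor relations, using small completeness and pre-exactness) fills in a hypothesis the paper leaves implicit but does not alter the argument.
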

\section{Majority categories have strict refinements}
The notion of a \emph{majority category} was first introduced and studied in \cite{Hoe18a} and \cite{Hoe18b}. The theorem below slightly updates Theorem~3.1 in \cite{Hoe18b}, where the equivalence of $(i)$, $(ii)$, $(iii)$ and $(v)$ of the theorem below was shown.

\begin{theorem} \label{thm-characterization-majority-categories}
	For a regular category $\C$, the following are equivalent.
	\begin{enumerate}[(i)]
\item For any three reflexive relations $R,S,T$ on any object $X$ in $\C$ we have $R \cap (S \circ T) \leqslant (R \cap S) \circ (R \cap T)$.
\item For any three reflexive relations $R,S,T$ on any object $X$ in $\C$ we have $R \circ(S \cap T) \geqslant (R \circ S) \cap (R \circ T)$.
\item For any three equivalence relations $A,B,C$ on any object $X$ in $\C$ we have $A \cap (B \circ C) = (A \cap B) \circ (A \cap C)$.
\item For any three equivalence relations $A,B,C$ on any object $X$ in $\C$ we have $A \circ (B \cap C) = (A \circ B) \cap (A \circ C)$.
\item $\C$ is a majority category.
	\end{enumerate}
\end{theorem}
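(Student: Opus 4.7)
The plan is to leverage Theorem 3.1 of \cite{Hoe18b}, which already establishes the equivalence of (i), (ii), (iii), and (v). It therefore suffices to fit (iv) into this chain of equivalences, and the most natural pairing is with (iii) since both concern equivalence relations.

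First I would dispatch the easy direction (ii) $\Rightarrow$ (iv). Equivalence relations are, in particular, reflexive, so (ii) applied to $A, B, C \in \Eq{X}$ gives $A \circ (B \cap C) \geqslant (A \circ B) \cap (A \circ C)$. The reverse inequality is automatic from monotonicity of composition, since $B \cap C \leqslant B$ and $B \cap C \leqslant C$. This yields the equality of (iv).

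The main step is (iv) $\Rightarrow$ (iii). The key observation is that (iv) is self-dual for equivalence relations: since $X^{\circ} = X$ whenever $X$ is an equivalence relation, applying $(-)^\circ$ to the identity $A \circ (B \cap C) = (A \circ B) \cap (A \circ C)$ and using $(X \circ Y)^\circ = Y^\circ \circ X^\circ$ yields the companion identity
\[
(B \cap C) \circ A = (B \circ A) \cap (C \circ A).
\]
Using both forms, I would expand $(A \cap B) \circ (A \cap C)$ by applying (iv) with first argument $A \cap B$ (which is an equivalence relation), and then reduce each of the two resulting factors by the dual form. Transitivity gives $A \circ A = A$, and reflexivity of $B$ and $C$ gives $A \leqslant B \circ A$ and $A \leqslant A \circ C$, so the expression collapses to $A \cap (B \circ C)$. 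This is precisely the nontrivial inclusion of (iii), the reverse being automatic from transitivity of $A$ and monotonicity.

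The main obstacle is essentially bookkeeping: organizing the expansion so that the absorptions $A \cap (B \circ A) = A$ and $A \cap (A \circ C) = A$ appear in the right order. No deeper categorical input is required, since every intermediate expression consists of intersections, compositions, and opposites of equivalence relations, all controlled by the standard allegorical identities available in any regular category. Once (iv) $\Rightarrow$ (iii) is established, the chain closes via the already-known implications (iii) $\Rightarrow$ (v) $\Rightarrow$ (ii) from \cite{Hoe18b}, completing the equivalence of all five conditions.
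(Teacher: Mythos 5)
Your proposal is correct and follows essentially the same route as the paper: the easy implication (ii) $\Rightarrow$ (iv) by reflexivity and monotonicity, then (iv) $\Rightarrow$ (iii) by first establishing the companion identity $(B \cap C) \circ A = (B \circ A) \cap (C \circ A)$ and expanding $(A \cap B) \circ (A \cap C)$ exactly as in the paper's displayed computation. The only (cosmetic) difference is that you obtain the companion identity via the opposite-relation involution, whereas the paper appeals to ``a similar argument'' from Theorem~3.1 of \cite{Hoe18b}; your derivation is, if anything, more self-contained.
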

\begin{proof}
It is easily seen that $(ii)$ implies $(iv)$, thus it suffices show that $(iv)$ implies $(iii)$. We note that using a similar argument as in the proof of Theorem~3.1 in \cite{Hoe18b}, it can be shown that if $(iv)$ holds, then the formula $(B \cap C) \circ A = (B \circ A) \cap (C \circ A)$ also holds for any equivalence relations $A,B,C$ on any object $X$ in $\C$. Suppose that $(iv)$ holds, and that $A,B,C$ are any equivalence relations on the same object in $\C$, then:
\begin{align*}
(A \cap B) \circ (A \cap C) &= ((A \cap B) \circ A) \cap ((A \cap B) \circ C) \\
 &= A \cap (A \circ C) \cap (B \circ C) \\
  &= A \cap (B \circ C).
\end{align*}
This shows that $(iii)$ holds. 
\end{proof}
In \cite{Gra04} the notion of a \emph{factor permutable} category was introduced, and as we will see, every regular majority category is factor permutable. 
\begin{definition}
	A regular category $\C$ is said to be \emph{factor permutable} if for any equivalence relation $E$ on any object $X$ in $\C$ we have $F \circ E = E \circ F$ for any factor relation $F \in F(X)$. 
\end{definition}
\begin{proposition} \label{prop-majority-factor-permuatable}
	Every regular majority category $\C$ is factor-permutable.
\end{proposition}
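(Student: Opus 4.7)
The plan is to establish both inclusions $F \circ E \leqslant E \circ F$ and $E \circ F \leqslant F \circ E$ for an arbitrary equivalence relation $E$ and factor relation $F$ on an object $X$, using the complement $F'$ of $F$ (which by Proposition~\ref{prop-factor-relation-characterization} together with symmetry of equivalence relations satisfies $F \cap F' = \Delta_X$ and $F \circ F' = F' \circ F = \nabla_X$) combined with the two equivalent forms of the majority property recorded in Theorem~\ref{thm-characterization-majority-categories}.

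The first move is to record an auxiliary inclusion $(F \circ E) \cap F' \leqslant E$ (and, by the symmetric computation, $(E \circ F) \cap F' \leqslant E$), which I would extract from the equational form (iii) of the majority property applied to the equivalence relations $F', F, E$:
\[
F' \cap (F \circ E) = (F' \cap F) \circ (F' \cap E) = \Delta_X \circ (F' \cap E) = F' \cap E \leqslant E.
\]

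For the inclusion $F \circ E \leqslant E \circ F$, I would then use the tautology $F \circ E = (F \circ E) \cap (F' \circ F)$ together with the reflexive-relation inequality (i) of Theorem~\ref{thm-characterization-majority-categories} applied to $R = F \circ E$, $S = F'$, $T = F$, which yields
\[
F \circ E \leqslant ((F \circ E) \cap F') \circ ((F \circ E) \cap F) = ((F \circ E) \cap F') \circ F \leqslant E \circ F,
\]
where the middle equality uses $F \leqslant F \circ E$ and the last step invokes the auxiliary inclusion. The reverse inclusion $E \circ F \leqslant F \circ E$ is obtained by the entirely parallel argument: one decomposes $E \circ F = (E \circ F) \cap (F \circ F')$ and applies (i) instead to $R = E \circ F$, $S = F$, $T = F'$, then collapses $(E \circ F) \cap F$ to $F$ and bounds $(E \circ F) \cap F'$ by $E$ via the auxiliary step.

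The main obstacle is guessing the right decomposition of $\nabla_X$ to feed into the majority inequality: expressing $\nabla_X$ as $F' \circ F$ (respectively $F \circ F'$) is what forces one of the two factors of the resulting majority composite to collapse onto $F$, while the other factor becomes confined to $F'$, where the equational form of majority is used to push it down into $E$. Once this choice is made, the rest of the argument is a purely formal manipulation of relational composites and intersections.
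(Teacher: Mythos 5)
Your proof is correct and follows essentially the same route as the paper's: both hinge on rewriting the composite $E \circ F$ (resp.\ $F \circ E$) as its intersection with $\nabla_X = F \circ F'$ and then applying the majority inequality of Theorem~\ref{thm-characterization-majority-categories} to collapse one factor of the resulting composite onto $F$ while confining the other to $F'$, where it is pushed down into $E$. The only cosmetic differences are that the paper establishes just the single inclusion $E \circ F \leqslant F \circ E$ (the reverse following automatically by symmetry of equivalence relations) and uses inequality $(i)$ a second time for the auxiliary bound where you invoke the equational form $(iii)$; both variants are valid.
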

\begin{proof}
	Let $E$ be any equivalence relation on an object $X$ in $\C$, and let $F$ be a factor relation on $X$ with complement $F'$. Then 
	\[
	E \circ F = (E \circ F) \cap (F \circ F') \leqslant ((E \circ F) \cap F) \circ ((E \circ F) \cap F') \leqslant F \circ (E \cap F') \circ (F \cap F') \leqslant F \circ E
	\]
	by Theorem~\ref{thm-characterization-majority-categories}.
\end{proof}

\begin{proposition}
Every object $X$ with global support in a pre-exact majority category is projection-coextensive.
\end{proposition}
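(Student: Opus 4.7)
The plan is to verify condition (vi) of Theorem~\ref{thm-characterization-two-strict-refinement}, since pre-exactness makes this equivalent to projection-coextensivity for an object with global support. Given two factor relations $F, G \in F(X)$, with $G'$ the complement of $G$ and $q \colon X \to X/F$ the canonical quotient, condition (vi) asks for $F \circ G = G \circ F$ and for $q(G) \cap q(G') = \Delta_{X/F}$. The first identity is handed to us for free: Proposition~\ref{prop-majority-factor-permuatable} asserts that in a majority category every factor relation permutes with every equivalence relation.

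For the second identity my plan is to first compute $q^{-1}\bigl(q(G) \cap q(G')\bigr)$ in closed form, then transfer the conclusion back across the Galois connection induced by $q \times q$. Using the identity $q^{-1}(q(E)) = F \circ E \circ F$ (recorded just before Proposition~\ref{prop-characterization-two-strict-refinement}, valid because $F$ is the kernel pair of $q$) together with the fact that $q^{-1}$ preserves binary meets as a right adjoint, the problem reduces to showing
\[
(F \circ G \circ F) \cap (F \circ G' \circ F) = F.
\]
Factor permutability together with $F \circ F = F$ collapses the triple composites to $F \circ G$ and $F \circ G'$ respectively. At this point the majority distributive law, in the form of identity (iv) of Theorem~\ref{thm-characterization-majority-categories} applied to the equivalence relations $F, G, G'$, yields
\[
(F \circ G) \cap (F \circ G') \;=\; F \circ (G \cap G') \;=\; F \circ \Delta_X \;=\; F,
\]
the middle equality using $G \cap G' = \Delta_X$ from Proposition~\ref{prop-factor-relation-characterization}.

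To close the loop I would invoke that $q$ is a regular epimorphism, hence so is $q \times q$, and therefore $q(q^{-1}(S)) = S$ for every relation $S$ on $X/F$. Applying this to $S = q(G) \cap q(G')$ and comparing with $q(F) = \Delta_{X/F}$ upgrades the pullback-level equality to $q(G) \cap q(G') = \Delta_{X/F}$, as required.

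The substantive categorical input (factor permutability and the distributivity of $\circ$ over $\cap$) comes entirely from the majority hypothesis, and pre-exactness is used only to make Theorem~\ref{thm-characterization-two-strict-refinement}(vi) available. The only delicate point is the final transfer step via the Galois connection, where one must be careful that $q \times q$ is indeed a regular epimorphism so that direct image and inverse image are mutually inverse on the appropriate fixed subposets of subobjects; but this is standard for regular categories and requires no further hypothesis here.
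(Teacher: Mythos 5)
Your proof is correct, but it reaches the conclusion through a different clause of Theorem~\ref{thm-characterization-two-strict-refinement} than the paper does. The paper verifies clause (v): given factor relations $F,G$ with complements $F',G'$, it uses factor permutability to see that $F\circ G$ is an equivalence relation, and then checks directly that $\overline{F\circ G}$ and $F'\cap G'$ satisfy the two conditions of Proposition~\ref{prop-factor-relation-characterization} (pre-exactness enters to rewrite $\overline{F\circ G}\cap(F'\cap G')$ as $\overline{(F\circ G)\cap(F'\cap G')}$), so that they are complementary factor relations; hence $F(X)$ is closed under $\overline{\circ}$ and $\cap$, and distributivity then follows from Theorem~\ref{thm-characterization-majority-categories}. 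You instead verify clause (vi) by a Galois-connection computation: $q^{-1}\bigl(q(G)\cap q(G')\bigr)=(F\circ G\circ F)\cap(F\circ G'\circ F)=(F\circ G)\cap(F\circ G')=F\circ(G\cap G')=F$, and then push forward along the regular epimorphism $q\times q$. Each step checks out: inverse image preserves meets, $F\circ G\circ F=F\circ F\circ G=F\circ G$ by Proposition~\ref{prop-majority-factor-permuatable}, the middle equality is Theorem~\ref{thm-characterization-majority-categories}(iv), $G\cap G'=\Delta_X$ by Proposition~\ref{prop-factor-relation-characterization}, and $(q\times q)_*(q\times q)^*=\mathrm{id}$ because $q\times q$ is a regular epimorphism. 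Both arguments rest on exactly the same two consequences of the majority hypothesis---factor permutability and the distributivity of $\circ$ over $\cap$---but yours localizes the majority input to a single application of the distributive law and avoids identifying the complement of $\overline{F\circ G}$ explicitly, at the cost of the (standard) bookkeeping about direct and inverse images. One minor point: the equivalence of (iii) and (vi) that you need is already Proposition~\ref{prop-characterization-two-strict-refinement}, which does not carry the small-completeness hypothesis appearing in Theorem~\ref{thm-characterization-two-strict-refinement}; citing the proposition keeps the argument at the same level of generality as the statement being proved.
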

\begin{proof}
 Given any two factor relations $F,G$ with complements $F',G'$ respectively, the two equivalence relations $F$ and $G$ permute by Proposition~\ref{prop-majority-factor-permuatable}, so that the composite $F \circ G$ is an equivalence relation. Moreover we have 
\[
\overline{F \circ G} \cap (F'\cap G') = \overline{(F \circ G )\cap (F'\cap G')} = \Delta_X,
\]
but also, we have
\[
\nabla_X = (F \circ G) \circ (F' \cap G') \subseteq \overline{F \circ G} \circ (F'\cap G').
\]
Therefore, $\overline{F \circ G}$ and $F' \cap G'$ are complementary factor relations, so that $F(X)$ is closed under the operations of $\overline{\circ}$ and $\cap$. It then follows by Theorem~\ref{thm-characterization-majority-categories} that $F(X)$ is distributive, so that by Theorem~\ref{thm-characterization-two-strict-refinement}, $X$ is projection-coextensive.
\end{proof}
\section{Centerless objects in a Mal'tsev category have strict refinements}
\noindent
Recall the notion of a \emph{Mal'tsev category}: 
\begin{definition}[\cite{CLP91, CPP91}]
A finitely complete category $\C$ is Mal'tsev if any reflexive relation is an equivalence relation.
\end{definition}
In \cite{BG02}, the authors develop a categorical approach to centrality of equivalence relations. The central notion, that of a \emph{connector} of equivalence relations, is defined in any finitely-complete category. When the base category is Mal'tsev, it reduces to the following:
\begin{definition}[\cite{BG02}]
Let $\C$ be a Mal'tsev category, and let $(r_1,r_2):R_0 \rightarrow X\times X$ and $(s_1,s_2):S_0 \rightarrow X \times X$ represent two equivalence relations $R$ and $S$ on an object $X$, respectively. Consider the pullback below:
\[
\xymatrix{
R_0 \times_X S_0 \ar[r]^-{p_1} \ar[d]_-{p_2} &  S_0 \ar[d]^{s_1} \\
R_0 \ar[r]_{r_2} & X
}
\]
A \emph{connector} between $R_0$ and $S_0$ is a morphism $p:R_0 \times_X S_0 \rightarrow X$ such that $x R p(x,y,z) S z$ and 
\[
p(x,y,y) = x = p(y,y,x).
\]
If there exists a representative of $R$ which admits a connector with a representative of $S$, then we say that $R$ and $S$ are \emph{connected}, or that the pair $(R,S)$ is commuting. 
\end{definition}
\noindent
Let $\C$ be a regular Mal'tsev category, then for any equivalence relations $E,F,G,H$ on any object $X$ in $\C$, the following properties hold:
\begin{itemize}
\item  If $E \leqslant F$ and $(F,G)$ is commuting, then $(E,G)$ is commuting (Proposition~3.10 in \cite{BG02}). 
\item  If $(E,F)$ is commuting and $f:X \rightarrow Y$ is any regular epimorphism, then $(f(E), f(F))$ is commuting (Proposition~4.2 in \cite{BG02}). 
\item  If $(E,F)$ is commuting, and $(E,G)$ is commuting, then $(E,F\vee G)$ is commuting (Proposition~4.3 in \cite{BG02}). 
\item If $(E,G)$ is commuting, and $(F,H)$ is commuting, then $(E \times F, G \times H)$ is commuting (Proposition 3.12 in \cite{BG02}).
\item If $E \cap F = \Delta_X$, then $(E,G)$ is commuting (Lemma~3.9 in \cite{BG02}).
\end{itemize}
\begin{definition}
An object $X$ in a regular Mal'tsev category $\C$ is said to be \emph{centerless} if for any equivalence relation $E$ on $X$, we have that if $(E, \nabla_X)$ is commuting then $E = \Delta_X$. 
\end{definition}
\begin{proposition} \label{prop-product-preserves-centerless}
Let $X$ and $Y$ be objects in a regular Mal'tsev category $\C$, then $X \times Y$ is centerless if and only if both $X$ and $Y$ are centerless. 
\end{proposition}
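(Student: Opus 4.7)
The statement is a biconditional, and my approach for each direction is to manipulate commuting pairs using the five listed properties of connectors in a regular Mal'tsev category.

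For the implication that centerless $X$ and $Y$ imply $X \times Y$ centerless, I would begin with an equivalence relation $E$ on $X \times Y$ such that $(E, \nabla_{X \times Y})$ is commuting, and push $E$ down along the product projections $\pi_X : X \times Y \to X$ and $\pi_Y : X \times Y \to Y$. Assuming these projections are regular epimorphisms (which holds, e.g., when $X$ and $Y$ have global support), the direct images $E_X = \pi_X(E)$ on $X$ and $E_Y = \pi_Y(E)$ on $Y$ are reflexive, hence equivalence relations by the Mal'tsev property, and the second bullet point yields $(E_X, \nabla_X)$ and $(E_Y, \nabla_Y)$ commuting. Centerlessness of $X$ and $Y$ then forces $E_X = \Delta_X$ and $E_Y = \Delta_Y$, and since $E \leqslant E_X \times E_Y = \Delta_X \times \Delta_Y = \Delta_{X \times Y}$ while $E \geqslant \Delta_{X \times Y}$ by reflexivity, we get $E = \Delta_{X \times Y}$.

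For the converse implication, I would lift a central equivalence relation on a factor to one on the product. Starting from $E$ on $X$ with $(E, \nabla_X)$ commuting, the fifth bullet point (applied to $\Delta_Y \cap \nabla_Y = \Delta_Y$) gives $(\Delta_Y, \nabla_Y)$ commuting, and the fourth bullet point (product of commuting pairs) then yields $(E \times \Delta_Y, \nabla_X \times \nabla_Y) = (E \times \Delta_Y, \nabla_{X \times Y})$ commuting. Centerlessness of $X \times Y$ gives $E \times \Delta_Y = \Delta_{X \times Y} = \Delta_X \times \Delta_Y$, from which $E = \Delta_X$ follows by projecting back to $X \times X$. The symmetric argument gives centerlessness of $Y$.

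The main obstacle I anticipate is the regularity of the product projections $\pi_X$ and $\pi_Y$ used in the forward implication: without them being regular epimorphisms, the direct image $\pi_X(E)$ may fail to be an equivalence relation on all of $X$ (it would only be one on the image of $\pi_X$), and the containment $E \leqslant E_X \times E_Y$ would need reinterpretation. This obstacle dissolves when $X$ and $Y$ have global support, a hypothesis that aligns with the regular setting in which centerless objects are studied throughout the paper. By comparison, extracting $E = \Delta_X$ from $E \times \Delta_Y = \Delta_X \times \Delta_Y$ in the converse direction is a routine projection argument, modulo the same non-degeneracy of $Y$.
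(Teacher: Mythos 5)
Your proof is correct and follows essentially the same route as the paper's: in one direction the paper likewise takes direct images of $E$ along the two projections and invokes centerlessness of the factors to get $\pi_1(E)=\Delta_X$, $\pi_2(E)=\Delta_Y$, hence $E=\Delta_{X\times Y}$, and in the other it forms the product of the commuting pairs $(E,\nabla_X)$ and $(\Delta_Y,\nabla_Y)$ exactly as you do. The regular-epimorphism concern you raise about the projections applies equally to the paper's own argument (which uses Proposition~4.2 of \cite{BG02} without comment); it is harmless in the paper's intended application, where the products in question are decompositions of an object with global support.
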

\begin{proof}
Suppose that $X \times Y$ is centerless, and let $E$ be any equivalence relation on $X$ such that $(E,\nabla_X)$ is commuting. Trivially, we have that $(\Delta_Y, \nabla_Y)$ is commuting, so that $(E \times \Delta_Y, \Delta_X \times \nabla_Y)$ is commuting. Therefore $E \times \Delta_Y = \Delta_{X\times Y}$, which implies $E = \Delta_X$. Conversely, if $X$ and $Y$ are centerless then given any equivalence relation $E$ on $X\times Y$ such that $(E,\nabla_{X \times Y})$ is commuting, then $(\pi_1(E), \nabla_X)$ and $(\pi_2(E), \nabla_Y)$ is commuting, so that $\pi_1(E) = \Delta_X$ and $\pi_2(E) = \Delta_Y$. This implies that $E = \Delta_{X \times Y}$. 
\end{proof}
\begin{proposition} \label{prop-centerless-implies-PC}
If $X$ is a centerless object with global support in a pre-exact Mal'tsev category, then $X$ is projection-coextensive.
\end{proposition}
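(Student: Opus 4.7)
The plan is to verify condition $(vi)$ of Theorem~\ref{thm-characterization-two-strict-refinement}. Thus for any factor relations $F, G$ on $X$, writing $G'$ for the complement of $G$ and $q : X \to X/F$ for a canonical quotient, I need to show $F \circ G = G \circ F$ and $q(G) \cap q(G') = \Delta_{X/F}$. The first equality is immediate since in any Mal'tsev category all equivalence relations permute.

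For the second, note that $F$ being a factor relation with some complement $F'$ yields $X \cong X/F \times X/F'$, so Proposition~\ref{prop-product-preserves-centerless} gives that $X/F$ is itself centerless. It thus suffices to exhibit $(q(G) \cap q(G'), \nabla_{X/F})$ as a commuting pair of equivalence relations on $X/F$, since centerlessness will then force $q(G) \cap q(G') = \Delta_{X/F}$ as required.

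To build this commuting pair I invoke the five properties of connectors recalled as bullet points just before the statement. Since $G \cap G' = \Delta_X$, the last bullet gives that $(G, G')$ is commuting; applying the second bullet to the regular epimorphism $q$ then produces a commuting pair $(q(G), q(G'))$ of equivalence relations on $X/F$. (Here $q(G)$ and $q(G')$ are automatically equivalence relations, since in a Mal'tsev category every reflexive relation is one.) Because $q$ is a regular epimorphism and $G \circ G' = \nabla_X$, we also have $q(G) \circ q(G') \supseteq q(G \circ G') = \nabla_{X/F}$, so this composite equals $\nabla_{X/F}$, which is precisely the join $q(G) \vee q(G')$ in the Mal'tsev setting.

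Finally, applying the first bullet with $q(G) \cap q(G') \leqslant q(G)$ shows that $(q(G) \cap q(G'), q(G'))$ is commuting, and symmetrically $(q(G) \cap q(G'), q(G))$ is commuting. The third bullet then assembles these into $(q(G) \cap q(G'), q(G) \vee q(G')) = (q(G) \cap q(G'), \nabla_{X/F})$ commuting, and centerlessness of $X/F$ finishes the argument. The only real subtlety is keeping track of how the commuting-pair property transfers through direct image, intersection and join, but this is exactly what the five bullet properties encode.
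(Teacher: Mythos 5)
Your proof is correct and follows essentially the same route as the paper's: verifying condition $(vi)$ of Theorem~\ref{thm-characterization-two-strict-refinement} by using permutability of equivalence relations in regular Mal'tsev categories, passing the commuting pair $(G,G')$ through the quotient $q$, and assembling $(q(G)\cap q(G'),\nabla_{X/F})$ as a commuting pair via the listed connector properties so that centerlessness of $X/F$ (from Proposition~\ref{prop-product-preserves-centerless}) forces $q(G)\cap q(G')=\Delta_{X/F}$. The only difference is that you spell out why $q(G)\vee q(G')=\nabla_{X/F}$, which the paper asserts without comment.
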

\begin{proof}
We will show that $X$ satisfies the conditions of $(vi)$ in Theorem~\ref{thm-characterization-two-strict-refinement}. Suppose that $F,G \in F(X)$ are any factor relations, and let $G'$ be the corresponding complement of $G$. Then we have that $F \circ G = G \circ F$, since regular Mal'tsev categories are congruence permutable \cite{CPP91}. Let $q:X \rightarrow X/F$ be a canonical quotient morphism, then by Proposition~\ref{prop-product-preserves-centerless}, it follows that $X/F$ is centerless. Since $G \cap G' = \Delta_X$ it follows that the pair $(G,G')$ is commuting, and therefore $(q(G), q(G'))$ is commuting, so that both $(q(G)\cap q(G'), q(G))$ and $(q(G)\cap q(G'), q(G'))$ is commuting, and consequently $(q(G)\cap q(G'), q(G') \vee q(G))$ is commuting. Since $q(G') \vee q(G) = \nabla_{X/F}$, we have that $(q(G)\cap q(G'), \nabla_{X/F})$ is commuting. This implies $q(G)\cap q(G') = \Delta_{X/F}$ since $X/F$ is centerless. 
\end{proof}

\section{Regular-coextensivity and factorable congruences}
Suppose that $X$ is any object in a category $\C$ with products. Given any product diagram $A \xleftarrow{\pi_1} X \xrightarrow{\pi_2} B$, and any effective equivalence relations $E$ and $F$ represented by $e:E_0 \rightarrow A^2$ and $f:F_0 \rightarrow B^2$ then the composite monomorphism
\[
E_0 \times F_0 \xrightarrow{e \times f} A^2 \times B^2 \xrightarrow{\tau} X^2
\]
represents an effective equivalence relation on $X$ which will always be denoted by $E \times F$ in what follows.
\begin{definition}\label{def-factorable-congruences}
	An object $X$ in a category with finite products is said to have \emph{factorable congruences} if for any effective equivalence relation $E$ on $X$, and any product diagram $A \xleftarrow{\pi_1} X \xrightarrow{\pi_2} B$, there exist effective equivalence relations $E_1$ and $E_2$ on $A$ and $B$ respectively, such that $E_1 \times E_2 = E$.
\end{definition}
\begin{definition}
An object $X$ in a category $\C$ is said to be \emph{regularly-coextensive} if it is $\M$-coextensive with $\M$ the class of all regular epimorphisms in $\C$.
\end{definition}
\begin{proposition} \label{prop-factorable-congruences-reformulation}
	Let $\C$ be a regular category, then any object $X$ with global support is regularly-coextensive if and only if for any regular epimorphism $X \xrightarrow{q} Q$, and any binary product diagram $A \xleftarrow{\pi_1} X \xrightarrow{\pi_2} B$ there exist morphisms $q_1:A \rightarrow Q_1$ and $q_2:B \rightarrow Q_2$   and morphisms $p_1: Q \rightarrow Q_1$ and $p_2: Q \rightarrow Q_2$ such that in the commutative diagram
\[
\xymatrix{
	A \ar[d]_{q_1} &  X\ar[r]^{\pi_2} \ar[d]^q  \ar[l]_{\pi_1} & B \ar[d]^{q_2} \\
	Q_1  & \ar[r]_{p_2} Q  \ar[l]^{p_1} & Q_2
}
\]
the bottom row is a product diagram. 
\end{proposition}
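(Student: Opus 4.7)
The plan is to verify each direction by unpacking Definition~\ref{def-M-coextensive} with $\M$ the class of regular epimorphisms, leaning on Proposition~\ref{prop-half-projection-coextensive} and Remark~\ref{remark-product-projections-regular} throughout. The recurring observation is that global support is inherited by regular-epimorphic quotients, so that product projections out of any such quotient are automatically regular epis; this turns the situation ``top and bottom rows are product diagrams with regular-epi vertical morphisms'' into a source of $\M$-pushouts via Proposition~\ref{prop-half-projection-coextensive}.

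For the ``only if'' direction I would assume $X$ is regularly-coextensive and fix a regular epi $q:X \to Q$ and a binary product diagram $A \xleftarrow{\pi_1} X \xrightarrow{\pi_2} B$. Since $\pi_1$ and $\pi_2$ are regular epis by Remark~\ref{remark-product-projections-regular}, the existence clause of $\M$-coextensivity produces $\M$-pushouts of $q$ along each $\pi_i$, yielding regular epis $q_i : A_i \to Q_i$ and $p_i : Q \to Q_i$. Stacking these two $\M$-pushouts into a single commutative diagram of the shape appearing in the proposition, the ``iff'' clause of $\M$-coextensivity forces the bottom row to be a product diagram.

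The ``if'' direction requires checking both clauses of the definition. For the existence of $\M$-pushouts, given a regular epi $f: X \to Y$ and a product projection $\pi_1$ of $X$, I would pick a complement $\pi_2$ and invoke the hypothesis. The resulting maps $p_1, p_2$ are regular epis because they are product projections of $Y$, which has global support; each $q_i$ is in turn regular epi because the composite $p_i f = q_i \pi_i$ is, while $\pi_i$ is epi. Proposition~\ref{prop-half-projection-coextensive} then makes both squares pushouts, providing in particular an $\M$-pushout of $f$ along $\pi_1$. For the ``iff'' clause, consider a diagram of the shape of Definition~\ref{def-M-coextensive} with regular-epi verticals and top row a product diagram. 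The implication ``bottom a product $\Rightarrow$ both squares $\M$-pushouts'' is again Proposition~\ref{prop-half-projection-coextensive} together with the global-support observation. For the converse, apply the hypothesis to $p: X \to Y$ and the given top product diagram, producing an auxiliary diagram with the same top row and a product bottom row $Q_1 \xleftarrow{r_1} Y \xrightarrow{r_2} Q_2$; both the given left square and the auxiliary left square are pushouts of the span $A_1 \xleftarrow{\pi_1} X \xrightarrow{p} Y$, so $B_1 \cong Q_1$ canonically, and similarly $B_2 \cong Q_2$.

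The main obstacle is making this last step rigorous: one has to transport the product structure of $(Q_1, Y, Q_2)$ across the canonical isomorphisms on the two sides and check that the resulting cone $(B_1, Y, B_2)$ is precisely the one induced by $\pi'_1, \pi'_2$. This is a diagram chase using the universal properties of the two pushouts to identify the comparison map $Y \to B_1 \times B_2$ with the image of $Y \to Q_1 \times Q_2$ under the product of the isomorphisms, and while entirely routine, it is where the argument carries real content.
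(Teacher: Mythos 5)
Your proof is correct and follows essentially the same route as the paper: both directions reduce to Proposition~\ref{prop-half-projection-coextensive} together with the observation that global support passes to regular-epi quotients, making all the relevant product projections regular epimorphisms. The paper's own proof is much terser (it declares the ``only if'' part immediate and leaves the pushout-uniqueness transport implicit), but the underlying argument is the one you spell out.
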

\noindent
The proof below is similar to the proof of Theorem~\ref{thm-projection-coextensive-equivalent-to-strict-refinement}. 
\begin{proof}
The ``only if'' part is immediate. Consider the diagram in the statement of the proposition, then by Proposition~\ref{prop-half-projection-coextensive} it follows that the two squares are pushouts. This also implies that pushing out the top row along $q$ produces a product diagram. On the other hand, if the bottom row is a product diagram, then by Proposition~\ref{prop-half-projection-coextensive} both squares are pushouts.
\end{proof}
\begin{proposition} \label{prop-characteriziation-regular-coextensive}
Let $\C$ be a regular category, then any object $X$ with global support has factorable congruences if and only if it is regularly-coextensive.
\end{proposition}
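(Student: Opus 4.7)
The plan is to use Proposition~\ref{prop-factorable-congruences-reformulation} as the working definition of regular-coextensivity, so that the task reduces to matching up, on the one hand, effective equivalence relations on $X$ with their factorizations as $E_1 \times E_2$, and on the other hand, regular epimorphisms $q:X \to Q$ with the existence of a compatible factorization $Q \cong Q_1 \times Q_2$ over the product decomposition $A \times B$ of $X$. The bridge between the two is the standard correspondence in a regular category between effective equivalence relations on an object and isomorphism classes of regular-epi quotients, obtained by passing to kernel pairs and coequalizers. Throughout I will use that objects with global support have product projections which are regular epimorphisms (Remark~\ref{remark-product-projections-regular}) and that regular epimorphisms are closed under products and composition, and right-cancellable in the sense that $gf$ regular epi forces $g$ regular epi.

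For the ``only if'' direction, I would start with a regular epimorphism $q:X \to Q$ and a product diagram $A \xleftarrow{\pi_1} X \xrightarrow{\pi_2} B$, and let $E$ denote the kernel pair of $q$. By factorability, $E = E_1 \times E_2$ for some effective $E_1$ on $A$ and $E_2$ on $B$; take $q_1:A \to Q_1$ and $q_2:B \to Q_2$ to be the corresponding coequalizers. Since products of regular epimorphisms are regular epimorphisms, $q_1 \times q_2 : A \times B \to Q_1 \times Q_2$ is a regular epimorphism whose kernel pair is precisely $E_1 \times E_2 = E$, and hence agrees with the kernel pair of $q$. In a regular category this yields a canonical isomorphism $\varphi: Q \to Q_1 \times Q_2$ with $\varphi q = q_1 \times q_2$, and setting $p_1, p_2$ to be the product projections of $Q_1 \times Q_2$ transported through $\varphi$ gives the required commutative diagram whose bottom row is a product diagram, so the condition of Proposition~\ref{prop-factorable-congruences-reformulation} is satisfied.

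For the ``if'' direction, I would conversely take an effective equivalence relation $E$ on $X$ together with its coequalizer $q:X \to Q$, and a product diagram $A \xleftarrow{\pi_1} X \xrightarrow{\pi_2} B$. Applying regular-coextensivity via Proposition~\ref{prop-factorable-congruences-reformulation} produces morphisms $q_1, q_2, p_1, p_2$ making the diagram commute with the bottom row $Q_1 \xleftarrow{p_1} Q \xrightarrow{p_2} Q_2$ a product diagram. Since $X$ has global support, $Q$ does too (it is a regular-epi quotient of $X$), and therefore $p_1, p_2$ are regular epimorphisms by Remark~\ref{remark-product-projections-regular}; similarly $\pi_1, \pi_2$ are regular epimorphisms. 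From $p_1 q = q_1 \pi_1$ I conclude that $q_1 \pi_1$ is a regular epimorphism as a composite of two such, and by right-cancellability $q_1$ is a regular epimorphism; likewise for $q_2$. Let $E_1$ and $E_2$ be the kernel pairs of $q_1$ and $q_2$ respectively. Through the identification $Q \cong Q_1 \times Q_2$ induced by the bottom product diagram, $q$ is identified with $q_1 \times q_2$, so its kernel pair $E$ coincides with $E_1 \times E_2$ in the sense of Definition~\ref{def-factorable-congruences}. This is the desired factorization.

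The main obstacle is the bookkeeping around the canonical isomorphism $E_1 \times E_2 \cong \ker(q_1 \times q_2)$, which requires a careful look at the twist-map $\tau:A^2 \times B^2 \to X^2$ appearing in the definition of the product of equivalence relations; this is ultimately just a naturality check but should be stated explicitly so that it is clear that the two constructions $E \mapsto (\text{coequalizer of } E)$ and $q \mapsto (\text{kernel pair of } q)$ genuinely interchange ``factorability of $E$'' with ``factorability of $q$ as a product projection over $A \times B$''. Apart from this identification, the argument is a direct application of Proposition~\ref{prop-factorable-congruences-reformulation} together with the standard stability properties of regular epimorphisms.
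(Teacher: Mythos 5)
Your proposal is correct and follows essentially the same route as the paper: the ``only if'' direction is argued exactly as in the paper's proof (factor the kernel pair of $q$ as $E_1\times E_2$, pass to the quotients, and identify $Q$ with $Q_1\times Q_2$ via uniqueness of coequalizers of a common kernel pair), with Proposition~\ref{prop-factorable-congruences-reformulation} serving as the bridge. The only difference is that you also spell out the converse direction, which the paper's proof leaves implicit; your argument for it (global support of $Q$, right-cancellability of regular epimorphisms to see that $q_1,q_2$ are regular epimorphisms, then reading off $E=E_1\times E_2$ from the identification $Q\cong Q_1\times Q_2$) is sound.
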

\begin{proof}
Let $X$ have factorable congruences, and suppose that $E$ is any effective equivalence relation, and that $X \xrightarrow{f} Q$ is a quotient of $X$ by $E$. Suppose also that $A \xleftarrow{\pi_1} X \xrightarrow{\pi_2} B$ is any product diagram. By assumption, there exist effective equivalence relations $E_1$ and $E_2$ on $A$ and $B$ respectively with $E_1 \times E_2 = E$. Let $f_1:A \rightarrow Q_1$ and $f_2:B \rightarrow Q_2$ be the respective quotients of $E_1$ and $E_2$, then there exist $Q \xrightarrow{q_1} Q_1$ and $Q \xrightarrow{q_2} Q_2$ such that the bottom row in the diagram 
\[
\xymatrix{
	A \ar[d]_{f_1} &  X\ar[r]^{\pi_2} \ar[d]^f  \ar[l]_{\pi_1} & B \ar[d]^{f_2} \\
	Q_1  & \ar[r]_{q_2} Q  \ar[l]^{q_1} & Q_2
}
\]
is a product diagram. Thus by Proposition~\ref{prop-factorable-congruences-reformulation} it follows that $X$ is regularly-coextensive.
\end{proof}
\noindent 
Recall from Remark~\ref{remark-product-projections-regular} that in a regular category $\C$, every object with global support has regular-epimorphic product projections. This immediately gives the following corollary.  
\begin{corollary}\label{cor-reg-implies-proj}
If $X$ is an object with global support in a regular category $\C$, which is regularly-coextensive then $X$ is projection-coextensive.
\end{corollary}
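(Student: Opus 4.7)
My plan is to reduce the claim to the criterion for projection-coextensivity already established in Corollary \ref{cor-weakest-coextensivity}: for an object $X$ with global support in a regular category, projection-coextensivity is equivalent to the single condition that the pushout of any product diagram along any product projection of $X$ exists and is again a product diagram. Thus it suffices to verify this condition under the assumption that $X$ is regularly-coextensive.

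The key bridge between the two notions of coextensivity is provided by Remark \ref{remark-product-projections-regular}: because $X$ has global support, every product projection $p\colon X \to Q$ is automatically a regular epimorphism, hence lies in the class $\M$ of regular epimorphisms that is used to define regularly-coextensive. This is the only place where the global support hypothesis enters the argument.

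With this observation, I would fix an arbitrary product diagram $A \xleftarrow{\pi_1} X \xrightarrow{\pi_2} B$ and an arbitrary product projection $p\colon X \to Q$, noting that $\pi_1$, $\pi_2$ and $p$ are all regular epimorphisms. By the definition of $\M$-coextensivity applied to $X$ with $\M$ the class of regular epimorphisms, the $\M$-pushouts of $p$ along each of $\pi_1$ and $\pi_2$ exist, fitting into a commutative diagram of the shape displayed in Definition \ref{def-M-coextensive} whose three vertical morphisms are regular epimorphisms and whose top row is the given product diagram. The biconditional clause of $\M$-coextensivity then forces the bottom row to be a product diagram, since both squares are $\M$-pushouts by construction. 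This verifies precisely the hypothesis of Corollary \ref{cor-weakest-coextensivity}, so $X$ is projection-coextensive.

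I do not anticipate a genuine obstacle: the argument is essentially a direct unpacking of the definitions once the correct equivalent formulations (Corollary \ref{cor-weakest-coextensivity} and Remark \ref{remark-product-projections-regular}) are combined, and indeed the paper flags this as \emph{immediate}. The only subtlety is to confirm that the chosen product projection is eligible as an input to regular-coextensivity, which is exactly what the global support hypothesis guarantees.
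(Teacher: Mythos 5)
Your proposal is correct and rests on exactly the same key observation as the paper, namely that global support makes every product projection of $X$ (and of its factors) a regular epimorphism, so that regular-coextensivity can be applied to them. The paper leaves the deduction as ``immediate''; your routing of the details through Corollary~\ref{cor-weakest-coextensivity} is a legitimate and clean way to make that precise, not a genuinely different argument.
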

\begin{remark}
In \cite{I2000} the author shows, amongst other things, that if a non-empty universal algebra has factorable congruences, then it has the strict refinement property. Corollary~\ref{cor-reg-implies-proj}, generalizes this result. 
\end{remark}
\begin{remark}
We note that a similar proof of Proposition~2.7 in \cite{Hoe19} (see also Remark~2.8 in \cite{Hoe19}) may be used to show that if $\C$ is a category in which every object is regularly coextensive, then binary products commute with arbitrary coequalizers in $\C$.  This shows in particular, that if $\C$ is a variety of universal algebra with constants, in which every algebra has factorable-congruences, then binary products commute with arbitrary coequalizers in $\C$. 
\end{remark}
\section{Concluding remarks} \label{sec-concluding-remarks}
If $\M$ is a class of morphisms closed under products in $\C$, then we could have defined $X$ to be $\M$-coextensive when for any product diagram $A \xleftarrow{\pi_1} X \xrightarrow{\pi_2} B$ the canonical functor 
\[
(\M \downarrow A) \times (\M \downarrow B) \rightarrow (\M \downarrow X)
\]
is an equivalence (where $\M\downarrow Y$ is the full subcategory of $\C\downarrow Y$ consisting of morphisms from $\M$). Under suitable conditions on $\M$ this notion could be equivalently reformulated in a similar way as Definition~\ref{def-M-coextensive}, where ``$\M$-pushout'' would have a different meaning. Still further variants of coextensivity are possible, and it is an interesting question of whether or not other refinement properties, such as the \emph{intermediate refinement property} \cite{CJT64}, are captured as some variant of coextensivity.

\end{document}